\definecolor{yqyqyq}{rgb}{0.5019607843137255,0.5019607843137255,0.5019607843137255}\definecolor{uuuuuu}{rgb}{0.26666666666666666,0.26666666666666666,0.26666666666666666}
\definecolor{uququq}{rgb}{0.25098039215686274,0.25098039215686274,0.25098039215686274}
\definecolor{wwwwww}{rgb}{0.4,0.4,0.4}
\definecolor{uuuuuu}{rgb}{0.26666666666666666,0.26666666666666666,0.26666666666666666}
\setlist[itemize]{leftmargin=6mm}
\newcommand{\Pj}{\mathbb{P}}
\newcommand{\Q}{\mathbb Q}
\newcommand{\Aut}{\operatorname{Aut}}
\DeclareMathOperator{\Cl}{Cl}
\DeclareMathOperator{\rk}{rk}
\DeclareMathOperator{\Hom}{Hom}
\DeclareMathOperator{\Exc}{Exc}
\DeclareMathOperator{\Nef}{Nef}
\DeclareMathOperator{\Mov}{Mov}
\DeclareMathOperator{\Pic}{Pic}
\newtheorem{thm}{Theorem}[section]
\newtheorem{Lemma}[thm]{Lemma}
\newtheorem{Proposition}[thm]{Proposition}
\newtheorem{Corollary}[thm]{Corollary}
\theoremstyle{definition}
\newtheorem{Definition}[thm]{Definition}
\newtheorem{Remark}[thm]{Remark}
\newtheorem{Example}[thm]{Example}
\newtheorem{say}[thm]{}
\newtheorem{theorem}{Theorem}[section]
\theoremstyle{definition}
\theoremstyle{remark}
\newtheorem{rmkk}[thm]{Remark}
\newtheorem{exe}[thm]{Example}
\newcommand{\qee}{\mbox{\hspace{0.2mm}}\hfill$\triangle$}
\subjclass{Primary 14J28, 14E30; Secondary 11D09}
\keywords{Singular $K3$ surfaces, Mori dream spaces}
\title{Mori dream singular $K3$ surfaces}
\author{Antonio Laface}
\address{Antonio Laface, Departamento de Matematica, Universidad de Concepci\'on, Casilla 160-C, Concepci\'on, Chile}
\email{alaface@udec.cl}
\author{Alex Massarenti}
\address{Alex Massarenti, Dipartimento di Matematica e Informatica, Università di Ferrara, Via Machiavelli 30, 44121 Ferrara, Italy}
\email{msslxa@unife.it}
\author{William D. Montoya}
\address{William D. Montoya, Instituto de Matemática, Estatística e Computação Científica, Universidade Estadual de Campinas (UNICAMP), Rua Sérgio Buarque de Holanda 651, 13083-859, Campinas, SP, Brazil}
\email{wmontoya@ime.unicamp.br}
\begin{document}

\begin{abstract}
We take a first step towards the classification of singular Mori dream $K3$ surfaces. We prove that if the Picard lattice of a singular $K3$ surface is Mori dream, then the surface is Mori dream. Moreover, we show that for singular $K3$ surfaces, of Picard rank two, being Mori dream is equivalent to contain two negative curves intersecting each other, and apply this result to study Mori dreamness of $K3$ surfaces with a singular point of type $A_n$.   
\end{abstract}

\maketitle

\setcounter{tocdepth}{1}
\tableofcontents

\section{Introduction}
In this paper we begin the study of a possible classification of $K3$ surfaces that are Mori dream spaces. This last notion has been introduced by Hu and Keel in \cite{HuKeel}, and is motivated by the fact that these spaces are well behaved from the point of view of Mori's minimal model program. 

Mori dream smooth $K3$ surfaces have been classified in terms of Picard lattices \cite{ahl}. We recall that the Picard group of a $K3$ surface $X$ embeds, as a sublattice $\Lambda_X$, of the so called $K3$ lattice $\Lambda_{K3}$. We will say that a sublattice of $\Lambda_{K3}$ is a Mori dream lattice if it is the sublattice associated to a smooth Mori dream $K3$ surface. Furthermore, a smooth $K3$ surface of Picard rank two is Mori dream if and only if it contains a divisor  $D$  of self-intersection  $D^2 \in \{-2,0\}$. For higher Picard rank, there exists a comprehensive list of $178$ lattices corresponding to Mori dream $K3$ surfaces
\cite[Theorem 5.1.5.3]{adhl}.

In the smooth setting we prove, in Theorem \ref{moridreamthm}, that under mild hypotheses for smooth $K3$ surfaces of Picard rank two being Mori dream depends only on the discriminant of the intersection form:

\begin{thm}\label{main1}
Let $X,Y$ be two smooth $K3$ surfaces of Picard rank two such that the discriminant $d_X,d_Y$ of their intersection forms are equal. Assume that both $X$ and $Y$ contain a class of self-intersection $2a$ with $0\leq 2a\leq 6$. Then $X$ is Mori dream if and only if $Y$ is Mori dream. 
\end{thm}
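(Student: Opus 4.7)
The plan is to show that, under the hypotheses, $\Pic(X)$ and $\Pic(Y)$ are isomorphic as lattices. Since the Picard rank two characterization recalled in the introduction---a Picard rank two smooth $K3$ is Mori dream iff its Picard lattice represents $-2$ or $0$---depends only on the isomorphism class of the Picard lattice, this immediately yields the conclusion.

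If $a = 0$, both $X$ and $Y$ already contain a class of self-intersection $0$ and are Mori dream, so assume $a \in \{1,2,3\}$. Any class $H \in \Pic(X)$ with $H^2 = 2a$ is primitive: if $H = k H'$ with $k \geq 1$, then $(H')^2 = 2a/k^2$ must be an even integer, and for $2a \in \{2,4,6\}$ this forces $k = 1$. Hence $H_X$ extends to a $\Z$-basis $\{H_X, E_X\}$ of $\Pic(X)$ in which the Gram matrix has the form
\[
G_X = \begin{pmatrix} 2a & b_X \\ b_X & 2c_X \end{pmatrix}, \qquad b_X^2 - 4a c_X = d,
\]
and the same construction gives $G_Y$ with integers $b_Y, c_Y$ satisfying $b_Y^2 - 4a c_Y = d$.

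Next I would normalize $b_X$ (and $b_Y$): the change of basis $E_X \mapsto E_X + n H_X$ shifts $b_X$ by $2an$, and $E_X \mapsto -E_X$ sends $b_X$ to $-b_X$, so after these reductions we may assume $0 \leq b_X \leq a$ and $0 \leq b_Y \leq a$. Since $b_X^2 \equiv d \equiv b_Y^2 \pmod{4a}$, the equality $b_X = b_Y$ follows from the observation that the map $b \mapsto b^2 \bmod 4a$ is injective on $\{0,1,\ldots,a\}$ for each $a \in \{1,2,3\}$ (the images are $\{0,1\}$ modulo $4$, $\{0,1,4\}$ modulo $8$, and $\{0,1,4,9\}$ modulo $12$, respectively, all pairwise distinct). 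Once $b_X = b_Y$, the discriminant equation forces $c_X = c_Y$, so $G_X = G_Y$ and $\Pic(X) \cong \Pic(Y)$.

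The main point requiring attention, and the reason the hypothesis $2a \leq 6$ appears, is this injectivity of $b \mapsto b^2 \bmod 4a$ on $\{0,\ldots,a\}$. The property is sharp: at $a = 4$ the values $0^2$ and $4^2$ already coincide modulo $16$, so two non-isomorphic Picard lattices with the same discriminant and both containing a class of self-intersection $8$ could in principle occur; thus the strategy cannot be extended beyond the stated range without further input.
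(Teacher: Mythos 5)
Your proposal is correct and follows essentially the same route as the paper: normalize the Gram matrix so that $0\leq b\leq a$, deduce $b_X^2\equiv b_Y^2\pmod{4a}$ from equality of discriminants, and use that $b\mapsto b^2\bmod 4a$ is injective on $\{0,\dots,a\}$ for $a\leq 3$ to conclude the intersection forms agree. The only additions are the explicit primitivity check for the class of square $2a$ and the explicit injectivity verification, both of which the paper leaves implicit.
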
 

We apply Theorem \ref{main1} to identify which smooth quartic $K3$ 
surfaces in the Noether-Lefschetz locus, defined by the 
determinantal components, are Mori dream spaces, as detailed in 
Corollary \ref{cor:det}.

However, as we will show, the singular case is much more intricate and difficult to deal with than the smooth one. For instance, as shown in Example \ref{SpecF}, there are families of non Mori dream smooth $K3$ surfaces specializing to Mori dream singular $K3$ surfaces. In this example, the specialization is constructed as a contraction of a smooth non Mori dream $K3$ surface. More generally, determining which contractions of $K3$ surfaces are Mori dream remains an open problem.
Our first result in Theorem \ref{mdsk3}, which generalizes \cite[Proposition 2.10]{gar}, is the following:

\begin{thm}\label{MDL}
Let $X$ be a singular $K3$ surface whose Picard lattice $\Lambda_X$ is Mori dream. Then $X$ is a Mori dream surface.
\end{thm}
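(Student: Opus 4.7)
The plan is to transfer the Mori dream property from a smooth model to $X$. By hypothesis, there exists a smooth Mori dream $K3$ surface $Y$ with $\Pic(Y)\cong \Lambda_X$ as lattices, so $\Eff(Y)\subset \Lambda_X\otimes \R$ is rational polyhedral and generated by finitely many $(-2)$-classes coming from irreducible smooth rational curves on $Y$. My aim is to show that $\Eff(X)\subset \Pic(X)\otimes \R = \Lambda_X\otimes \R$ is also rational polyhedral, which, together with $K_X=0$ and the Du Val nature of the singularities of $X$, will yield Mori dreamness via a characterization of Mori dream $K3$ surfaces generalizing the smooth case of \cite{ahl}.

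The central step is to compare effective cones through the minimal resolution $\pi\colon \widetilde X\to X$, a smooth $K3$ surface. Pullback gives an isometric embedding $\pi^*\colon \Pic(X)\hookrightarrow \Pic(\widetilde X)$ whose image is orthogonal to the root lattice spanned by the exceptional $(-2)$-curves. The key observation is that effectivity of $(-2)$-classes in $\Lambda_X$ is a lattice-theoretic matter: for $D\in \Lambda_X$ with $D^2=-2$, Riemann-Roch applied on $\widetilde X$ to $\pi^*D$ forces one of $\pm\pi^*D$ to be effective, and hence one of $\pm D$ to be effective on $X$ via $\pi_*$. The same holds on $Y$ directly by Riemann-Roch, and the sign matches once one fixes a common ample class in $\Lambda_X\otimes \R$. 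Therefore the effective $(-2)$-classes agree for $X$ and $Y$, yielding $\Eff(Y)\subset \Eff(X)$. For the reverse inclusion, any irreducible effective curve $C$ on $X$ satisfies $[C]\in \Eff(Y)$: when $[C]^2\geq -2$ this follows from Riemann-Roch on $Y$ and positivity against an ample class, while for $[C]^2<-2$ one analyzes $\pi^*[C]$ on $\widetilde X$ in terms of its strict transform and exceptional components and pushes forward.

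The main obstacle will be making the inclusion $\Eff(X)\subset \Eff(Y)$ rigorous for irreducible curves $C\subset X$ passing through singularities with $[C]^2<-2$: direct application of Riemann-Roch does not suffice and a finer analysis of the resolution, exploiting that the exceptional contribution in $\pi^*[C]$ is controlled by the finite ADE data, is required. Once $\Eff(X)=\Eff(Y)$ is established, the rational polyhedral structure passes to $X$ and the Mori dream property follows.
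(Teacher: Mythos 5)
Your strategy is genuinely different from the paper's, and it has a gap at exactly the point you flag: the inclusion $\Eff(X)\subseteq\Eff(Y)$ is neither established nor establishable by the tools you propose. First, the class of an irreducible curve $C\subset X$ lives in $\Cl(X)$, which contains $\Pic(X)=\Lambda_X$ only with finite index (Lemma~\ref{res}); in general $[C]$ is only a rational point of $\Lambda_X\otimes\Q$ with negative \emph{non-integral} self-intersection (e.g.\ $C^2=-1$ or $C^2=-\tfrac{2}{5}$ in the paper's examples), so Riemann--Roch on $Y$ does not apply to it and no multiple of it has square $-2$ or $0$. (Also note the problematic range is $-2\le C^2<0$ with $[C]\notin\Lambda_X$, not $C^2<-2$: the computation $C^2=-2-E_C^2$ shows an irreducible curve on $X$ never has $C^2<-2$.) Second, and more seriously, the inclusion $\Eff(X)\subseteq\Eff(Y)$ genuinely fails in closely analogous situations: in Example~\ref{SpecF} the singular surface carries irreducible curves of self-intersection $-1$, whose classes lie outside the closed positive cone, while the smooth surface $Y$ with $\Pic(Y)\cong\Lambda_X$ has effective cone equal to the closure of the positive cone, so that $\Eff(X)\not\subseteq\Eff(Y)$. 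That example has a non--Mori dream lattice and hence does not contradict the theorem, but it shows that any proof of $\Eff(X)\subseteq\Eff(Y)$ must use the Mori dream hypothesis on $\Lambda_X$ in an essential way; your sketch does not, and the ``finer analysis of the resolution'' you defer is in effect the whole content of the theorem.

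For comparison, the paper avoids comparing cones on two unrelated surfaces altogether. Using the surjectivity of the period map it builds a one-parameter family whose very general fiber is a smooth $K3$ surface with Picard lattice $\Lambda_X$ and whose central fiber, after contracting by an ample class of $\Lambda_X$, is $X$ itself; polyhedrality of the effective cone then passes from the very general fiber to the special one by the specialization result \cite[Proposition 1.3]{lu}, and nefness implies semiampleness by the base point free theorem. The degeneration supplies exactly the geometric link between $X$ and the smooth Mori dream surfaces with lattice $\Lambda_X$ that a purely lattice-theoretic comparison lacks; if you wish to pursue your route, you would need to import such a link (or an argument bounding the extra extremal rays of $\Eff(X)$ coming from non-Cartier curves) before the polyhedrality of $\Eff(Y)$ can be transferred to $X$.
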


However, Theorem \ref{MDL} leaves unsolved the question of identifying singular $K3$ surfaces that are Mori dream spaces, even though their Picard lattice is not Mori dream. We provide an answer in the case where the $K3$ surface has Picard rank two and at most one singularity of type $A_n$. Our main results, in Proposition \ref{pro:mds} and Theorem \ref{MainAn}, can be summarized as follows:

\begin{thm}\label{main2}
Let $X$ be a, possibly singular, projective $K3$ surface of Picard rank two. The following are equivalent:
\begin{itemize}
\item[\rm{(i)}] $X$ is a Mori dream space;
\item[\rm{(ii)}] there are two effective divisors $D_1,D_2$ on $X$ with $D_i^2\leq 0$ and $D_1\cdot D_2>0$.
\end{itemize}
Furthermore, assume that the singular locus of $X$ consists of a point of type $A_n$ and $X$ contains an irreducible curve of negative self-intersection. If $n\notin\{11,14,15\}$ then $X$ is Mori dream. 
\end{thm}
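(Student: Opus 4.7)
My plan for $(i)\Leftrightarrow(ii)$ is to work inside $\Pic(X)_{\R}$, whose intersection form has signature $(1,1)$. For $(i)\Rightarrow(ii)$: the Mori dream hypothesis makes $\overline{\Eff}(X)$ rational polyhedral, and with $\rho(X)=2$ it is spanned by exactly two extremal rays, generated by effective classes $D_1,D_2$. Since the open positive cone lies in the interior of $\overline{\Eff}(X)$, each $D_i$ sits on its boundary, forcing $D_i^{2}\le 0$; if in addition $D_1\cdot D_2\le 0$, every non-negative combination would have non-positive square, contradicting the existence of ample classes. For $(ii)\Rightarrow(i)$ I would first reduce, by passing to irreducible components, to the case that $D_1$ and $D_2$ are primitive effective generators of two rays cutting out a cone that already contains the positive cone. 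A direct intersection-theoretic computation then forces any effective $D=\alpha D_1+\beta D_2$ to have $\alpha,\beta\ge 0$: stripping off a possible $D_1$-component, the inequality $D\cdot D_1\ge 0$ combined with $D_1^{2}\le 0$ and $D_1\cdot D_2>0$ forces $\beta\ge 0$, and symmetrically $\alpha\ge 0$. Hence $\overline{\Eff}(X)=\cone(D_1,D_2)$ is polyhedral, which together with $K_X\sim 0$ upgrades to the Mori dream property along the lines of the proof of Theorem~\ref{MDL}.

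For the $A_n$ assertion, let $\pi\colon\tilde X\to X$ be the minimal resolution, a smooth K3 of Picard rank $n+2$ whose exceptional locus is an $A_n$-chain $E_1,\dots,E_n$. Writing $\pi^{*}C=\tilde C+\sum_{i}a_iE_i$ with $a_i\ge 0$, one obtains
\[
C^{2}=\tilde C^{2}+\sum_i a_i\,(\tilde C\cdot E_i).
\]
Since $\tilde C^{2}\ge -2$ on a smooth K3 and the correction term is non-negative, the hypothesis $C^{2}<0$ forces $\tilde C^{2}=-2$: the strict transform $\tilde C$ is a $(-2)$-curve, and $\Pic(\tilde X)$ contains the rank $n+1$ configuration $\{E_1,\dots,E_n,\tilde C\}$. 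By the equivalence just proved, I am reduced to producing a second effective divisor $D$ on $X$ with $D^{2}\le 0$ and $D\cdot C>0$.

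My strategy is to take $D=\pi_{*}\tilde F$ where $\tilde F$ is a primitive isotropic nef class on $\tilde X$ realised as a Kodaira fiber supported on an affine Dynkin sub-configuration of $(-2)$-curves containing $\tilde C$. I would case-split on the intersection vector $c_k=\tilde C\cdot E_k$, which $C^{2}<0$ constrains to be small: the cyclic closure ($\tilde C$ meeting $E_1$ and $E_n$ once) yields an $\tilde A_n$-fiber but gives $C^{2}=0$, excluded by hypothesis; attaching $\tilde C$ at a single interior node $E_k$ with multiplicity one produces the $T$-shaped diagram $T_{k-1,n-k,1}$, which is affine of type $\tilde E_7$ or $\tilde E_8$ exactly for $(n,k)\in\{(7,4),(8,3),(8,6)\}$. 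For the remaining admissible patterns I would either locate additional $(-2)$-classes in $\Pic(\tilde X)$ completing an affine Dynkin configuration, or recognise $\Pic(\tilde X)$ inside the list of $178$ Mori dream K3 lattices of \cite[Theorem~5.1.5.3]{adhl}, in which case Theorem~\ref{MDL} applied to $\tilde X$ descends the Mori dream property to $X$ through the contraction $\pi$.

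The main obstacle is establishing a uniform dichotomy: for every $n\notin\{11,14,15\}$ at least one of the two constructions must succeed, and the three excluded ranks must be shown to be precisely those for which no affine Dynkin sub-configuration involving $\tilde C$ and the $A_n$-chain is available and simultaneously $\Pic(\tilde X)$ lies outside the Mori dream classification. This demands a complete enumeration of the intersection patterns of $\tilde C$ with the $A_n$-chain compatible with $C^{2}<0$, paired with a systematic comparison against both the list of affine Dynkin extensions and the finite list of Mori dream Picard lattices of rank $n+2$; carrying out this matching is the central technical step of the proof.
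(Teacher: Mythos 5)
Your treatment of the equivalence (i)$\Leftrightarrow$(ii) is essentially the paper's argument: polyhedrality of the effective cone plus the Hodge index theorem in one direction, and in the other the observation that $\cone(D_1,D_2)$ contains the positive cone, whence the effective cone is polyhedral (the paper quotes \cite[Proposition~1.2]{lu} here via Remark~\ref{lu2}) and nef classes are semiample through the resolution (Lemma~\ref{sa}). One small repair: the step ``$D\cdot D_1\ge 0$ forces $\beta\ge 0$'' only works once you already know $\alpha\ge 0$ (and vice versa), so you must first rule out $\alpha,\beta$ both negative by pairing with an ample class, exactly as you did in the forward direction.

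The $A_n$ assertion, however, has a genuine gap: the proposed construction cannot produce the required second divisor. You take $D=\pi_*\tilde F$ with $\tilde F$ a fiber class supported on an affine Dynkin configuration containing $\tilde C$ and the chain $E_1,\dots,E_n$. But a fiber class is orthogonal to every component of its fiber, hence to all the $E_i$ and to $\tilde C$; by Lemma~\ref{res}(ii) it is then $\pi^*D$ with $D^2=0$ and $D\cdot C=\tilde F\cdot\tilde C=0$, which is impossible in a rank-two hyperbolic lattice containing $C$ with $C^2<0$. Equivalently, whenever $\tilde C\cup E_1\cup\dots\cup E_n$ is an affine diagram one computes $\pi^*C=\tilde F/m$ and hence $C^2=0$ --- the very degeneration you excluded in the $\widetilde A_n$ case also kills your $\widetilde E_7,\widetilde E_8$ cases $(n,k)\in\{(7,4),(8,3),(8,6)\}$. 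If instead the fiber omits some $E_i$, then $\pi^*\pi_*\tilde F$ equals $\tilde F$ minus its orthogonal projection onto the negative definite lattice spanned by the $E_i$, so $D^2\ge 0$, again useless for criterion (ii). Moreover the decisive point --- why precisely $n\in\{11,14,15\}$ are excluded --- is left as an unexecuted ``central technical step,'' and the answer has nothing to do with affine sub-diagrams or with the list of $178$ Mori dream lattices (note $\rk\Pic(\widetilde X)=n+2$, and $\widetilde X$ need not be Mori dream even when $X$ is). The paper's mechanism is entirely different: a Pell-type argument (Lemma~\ref{FindingAnotherCurve}) produces a class $D_2$ with $D_2^2=D_1^2<0$ and $D_1\cdot D_2>0$, and its effectiveness is forced by Riemann--Roch on the resolution (Proposition~\ref{pro:eff}: $D^2+\{E_D\}^2>-4$ suffices), using that $D_1^2\in\{-\tfrac{n+2}{n+1},-\tfrac{4}{n+1}\}$ for $n\ge 8$ (Proposition~\ref{pro:An}) and that $\{E_D\}^2$ ranges over the values of the discriminant form of $A_n$. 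The ranks $11,14,15$ are exactly those for which the congruence $D^2+\{E_D\}^2\equiv 0\pmod 2$ admits a second solution with $D^2+\{E_D\}^2=-4$, where the effectiveness criterion fails. Nothing in your outline supplies this arithmetic input.
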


Most of the arguments used to prove Theorem \ref{main2} consist of an arithmetic part, using the theory of binary quadratic forms, to prove the existence of certain negative divisors, and a geometric part needed to prove their effectiveness. 

Finally, in Section \ref{c1s1} we apply Theorem \ref{main2} to provide explicit examples of singular Mori dream $K3$ surfaces embedded in $3$-dimensional weighted projective spaces.

\medskip

\subsection*{Acknowledgments} 
The first named author has been partially supported by Proyecto FONDECYT Regular n.~1230287. 

The second named author has been supported by the PRIN 2022 project-20223B5S8L "Birational geometry of moduli spaces and special varieties", and is a member of the Gruppo Nazionale per le Strutture Algebriche, Geometriche e le loro Applicazioni of the Istituto Nazionale di Alta Matematica "F. Severi" (GNSAGA-INDAM).

The third named author has been supported by the FAPESP postdoctoral grant 2019/23499-7 and BEPE Research Internship Abroad grant 2023/01360-2, and thanks for the hospitality the University of Ferrara and the University of Concepci\'on where part of this work was developed.

\section{Mori dream smooth $K3$ surfaces}
We recall the main definitions and prove that, under mild assumptions, a $K3$ surface $X$ of Picard rank two is Mori dream if and only if any $K3$ surface with the same discriminant as $X$ is Mori dream. 

\begin{Definition}
A $K3$ surface is a smooth projective surface $X$ with trivial canonical divisor: $K_X\sim 0$, and irregularity zero: $h^1(X,\mathcal{O}_X) = 0$. 
\end{Definition}

\begin{say}\label{K3lat}
If $X$ is a $K3$ surface then $H^2(X,\mathbb Z)$ is isometric to the $K3$ lattice
$$
\Lambda_{K3} = E_8^2\oplus U^3,
$$
which is an even unimodular lattice of signature $(3,19)$. The Picard group $L_X := \Pic(X)$ of $X$ embeds isometrically into $\Lambda_{K3}$ and its orthogonal complement is the transcendental lattice $T_X$ of $X$.

It follows that $L_X\oplus T_X$ has finite index in $\Lambda_{K3}$. This index equals the order of $L_X^*/L_X\simeq T_X^*/T_X$, where $L^* := \Hom(L,\mathbb Z)$ is the dual lattice of $L$ equipped with the rational quadratic form induced by $L$, we refer to \cite{hu} for further details.
\end{say}

\begin{Definition}\label{def:MDS} 
A normal projective $\mathbb{Q}$-factorial variety $X$ is called a \emph{Mori dream space}
if the following conditions hold:
\begin{itemize}
\item[-] $\Pic(X)$ is finitely generated, or equivalently $h^1(X,\mathcal{O}_X)=0$,
\item[-] $\Nef(X)$ is generated by the classes of finitely many semi-ample divisors,
\item[-] there is a finite collection of small $\mathbb{Q}$-factorial modifications
 $f_i: X \dasharrow X_i$, such that each $X_i$ satisfies the second condition above, and $
 \Mov(X) \ = \ \bigcup_i \  f_i^*(\Nef(X_i))$.
\end{itemize}
\end{Definition}

Let us recall~\cite[Theorem 5.1.5.1]{adhl}.

\begin{thm}\label{thm:mds1}
Let $X$ be a $K3$ surface. Then the following are equivalent:
\begin{enumerate}
    \item[\rm{(i)}] $X$ is a Mori dream surface.
    \item[\rm{(ii)}] The effective cone ${\rm Eff}(X)\subset {\rm Pic}_{\Q}(X)$ is polyhedral.
    \item[\rm{(iii)}] The automorphism group of $X$ is finite.
\end{enumerate}
\end{thm}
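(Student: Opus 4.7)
The plan is to establish the cyclic chain (i) $\Rightarrow$ (ii) $\Rightarrow$ (iii) $\Rightarrow$ (i). The implication (i) $\Rightarrow$ (ii) is a formal consequence of the Hu--Keel definition: for a Mori dream space, finite generation of the Cox ring immediately forces $\Eff(X)\subset \Pic_{\Q}(X)$ to be rational polyhedral. Since $X$ is a surface, intersection pairing is a perfect pairing on $\Pic_{\Q}(X)$ and $\Nef(X)=\Eff(X)^{\vee}$ is dual to the effective cone, so $\Nef(X)$ is also rational polyhedral.

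For (ii) $\Rightarrow$ (iii), I would note that $\Aut(X)$ acts on $\Pic(X)$ by isometries preserving $\Nef(X)$, and the kernel of this action is finite (being contained in the group of automorphisms fixing a polarization). If $\Nef(X)$ is already rational polyhedral, then its finite set of extremal rays is permuted by $\Aut(X)$, and an integral isometry group preserving a strictly convex rational polyhedral cone and fixing an interior lattice point is finite. The reverse direction (iii) $\Rightarrow$ (ii) is the deepest ingredient: one appeals to the Sterk--Kawamata form of the Morrison cone conjecture for $K3$ surfaces, which asserts that $\Aut(X)$ acts with a rational polyhedral fundamental domain on the (rational hull of the) nef cone. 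Once $\Aut(X)$ is finite, this fundamental domain equals $\Nef(X)$ up to a finite union of $\Aut(X)$-translates, forcing $\Nef(X)$ itself to be rational polyhedral and hence $\Eff(X)$ too by duality.

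Finally, (ii) $\Rightarrow$ (i) reduces the polyhedrality statement to the formal definition of MDS. On a surface there are no nontrivial small $\Q$-factorial modifications, so the Mov/Nef decomposition condition collapses to $\Mov(X)=\Nef(X)$, which is automatic; $\Pic(X)$ is finitely generated since $h^1(X,\cO_X)=0$; and polyhedrality of $\Nef(X)$ is inherited from $\Eff(X)$ as above. The remaining semi-ampleness of nef generators follows from the Kawamata--Shokurov base-point-free theorem applied with $K_X\sim 0$, together with direct analysis of nef classes of square zero on $K3$ surfaces (which define elliptic fibrations). The main obstacle in the whole argument is the reverse cone-conjecture direction (iii) $\Rightarrow$ (ii); the remaining implications are essentially formal once one has the surface-specific dictionary between the effective cone, the nef cone and semi-ampleness.
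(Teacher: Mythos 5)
The paper does not prove this statement --- it is quoted verbatim as \cite[Theorem 5.1.5.1]{adhl} --- and your cyclic chain is essentially the standard argument behind that reference: the only genuinely deep step is (iii) $\Rightarrow$ (ii) via the Sterk--Kawamata cone theorem for $K3$ surfaces, while the rest follows from duality between $\Nef(X)$ and $\overline{\Eff}(X)$, the collapse $\Mov(X)=\Nef(X)$ on surfaces, finiteness of the kernel of $\Aut(X)\to O(\Pic(X))$, and semiampleness of nef classes (base-point-freeness for $D^2>0$, elliptic pencils for $D^2=0$). Your sketch is correct as written; the only detail worth pinning down is that polyhedrality of the closed cone $\overline{\Eff}(X)$ yields polyhedrality of $\Eff(X)$ itself because each extremal ray is spanned by an effective class (a negative curve or a square-zero class, effective by Riemann--Roch).
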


Moreover, if the Picard number of $X$ is at least three, then $X$ is a Mori dream surface if and only if it contains only finitely many smooth rational curves. In this case, these curves are $(-2)$-curves and their classes generate the effective cone. In \cite{adhl} Mori dream $K3$ surfaces are characterized using the classification of $K3$ surfaces with finite automorphism group. 

The following result is a consequence of~\cite[Theorem 1.12.4]{Ni} and of the global Torelli theorem.

\begin{thm}\label{thm-ni}
For any even lattice $\Lambda$ of signature $(1,\rho)$ with $\rho\leq 10$, there exists a $K3$ surface $X$ such that $\Lambda_X\simeq \Lambda$.
\end{thm}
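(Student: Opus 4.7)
The plan is to combine a lattice-theoretic existence statement (Nikulin's Theorem 1.12.4) with the surjectivity of the period map and the global Torelli theorem for $K3$ surfaces, and then specialize the period to make the Picard lattice exactly $\Lambda$ rather than just containing it.

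First, I would apply Nikulin's Theorem 1.12.4 to produce a primitive embedding $\iota\colon \Lambda \hookrightarrow \Lambda_{K3}$ into the $K3$ lattice of signature $(3,19)$. The hypotheses of that theorem require the signature inequalities $1\leq 3$ and $\rho\leq 19$, together with the bound $\ell(A_\Lambda)\leq 22-(1+\rho)=21-\rho$ on the minimal number of generators of the discriminant group $A_\Lambda=\Lambda^{*}/\Lambda$; since always $\ell(A_\Lambda)\leq \rk(\Lambda)=\rho+1$, the numerical condition is satisfied precisely when $\rho+1\leq 21-\rho$, i.e.\ $\rho\leq 10$, which is exactly our hypothesis. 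This is the arithmetic core of the proof and the step where the bound $\rho\leq 10$ gets used.

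Next, let $T:=\Lambda^{\perp}\subset\Lambda_{K3}$ be the orthogonal complement, which is an even lattice of signature $(2,19-\rho)$ containing a positive-definite $2$-plane. Consider the period domain
\[
\Omega_T=\{[\omega]\in\P(T\otimes\C)\mid \omega\cdot\omega=0,\ \omega\cdot\bar\omega>0\}.
\]
I would pick $[\omega]\in\Omega_T$ \emph{very general}, outside the countable union of hyperplanes $\{\omega\cdot v=0\}$ for $v\in\Lambda_{K3}\setminus T$; this guarantees that the classes in $\Lambda_{K3}$ orthogonal to $\omega$ form exactly the sublattice $\iota(\Lambda)$.

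I would then invoke the surjectivity of the period map for $K3$ surfaces: there exists a $K3$ surface $X$ together with a marking $\varphi\colon H^{2}(X,\Z)\xrightarrow{\sim}\Lambda_{K3}$ whose period point equals $[\omega]$. By the Lefschetz $(1,1)$ theorem, the Picard lattice of $X$ is identified via $\varphi$ with the classes in $\Lambda_{K3}$ orthogonal to $\omega$, so $\Lambda_X\simeq\iota(\Lambda)\simeq\Lambda$ as required. (The global Torelli theorem is not strictly needed for existence; it only guarantees uniqueness up to isomorphism of the marked $K3$ surface with prescribed period.)

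The main obstacle is the first step: verifying the numerical condition in Nikulin's embedding theorem, since one must control $\ell(A_\Lambda)$ by the rank, and this is exactly what forces the restriction $\rho\leq 10$. The rest of the argument is formal, given the standard surjectivity theorem for the period map and the description of $\Pic(X)$ via the Hodge decomposition. A minor technical point to treat carefully is the genericity of $\omega$: the complement of the countable union of hyperplane sections of $\Omega_T$ is nonempty (indeed dense) because $\Omega_T$ is an irreducible complex manifold of positive dimension $20-\rho\geq 10$, so a very general period in $\Omega_T$ exists.
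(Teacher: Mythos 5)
Your argument is correct and follows exactly the route the paper intends: the theorem is stated there as a direct consequence of Nikulin's Theorem 1.12.4 together with the surjectivity of the period map and the global Torelli theorem, which is precisely what you carry out (picking a very general period in $\Omega_T$ to pin down $\Lambda_X$). Two cosmetic points only: Nikulin's Theorem 1.12.4 is usually phrased via the rank bound $\rk(\Lambda)\leq \tfrac{1}{2}\rk(\Lambda_{K3})=11$ rather than via $\ell(A_\Lambda)$ (your reduction is equivalent but skirts the boundary case $\rho=10$, which the rank form handles cleanly), and one should remark that the signature $(1,\rho)$ provides a class of positive square, so the resulting $K3$ surface is projective as required by the paper's definition.
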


From now on we will assume $X$ to be a $K3$ surface of Picard rank two. In this case an improvement of Theorem~\ref{thm:mds1} is given by \cite[Theorem 5.1.5.3]{adhl} which we now recall.

\begin{thm}\label{thethm}
Let $X$ be a $K3$ surface of Picard number two. Then the following are equivalent:
\begin{itemize}
\item[\rm{(i)}] $X$ is a Mori dream surface;
\item[\rm{(ii)}] $\Pic (X)$ contains a class $D$ with $D^2\in\{-2,0\}$.
\end{itemize}
\end{thm}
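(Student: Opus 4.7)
The plan is to prove the two implications separately, using Theorem~\ref{thm:mds1}, which identifies Mori dreamness of a $K3$ surface with polyhedrality of its effective cone $\Eff(X)$. For $(i)\Rightarrow(ii)$: if $\Eff(X)$ is polyhedral, then since $\rho(X)=2$ the cone $\Eff(X)\subset \Pic_\R(X)\cong \R^2$ has exactly two rational extremal rays. On a $K3$ every class in the positive cone $Q^+$ is big, hence effective, so $\overline{Q^+}\subseteq \Eff(X)$. Each extremal ray therefore lies either on the boundary of $\overline{Q^+}$, forcing $D^2=0$, or strictly outside it, in which case it is spanned by an irreducible curve $C$ with $C^2<0$; adjunction on a $K3$ forces $C^2=-2$. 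In either case (ii) holds.

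For $(ii)\Rightarrow(i)$: given $D\in \Pic(X)$ with $D^2\in\{-2,0\}$, Riemann--Roch on a $K3$ gives $h^0(\O(D))+h^0(\O(-D))\geq \chi(\O(D))=2+D^2/2\geq 1$, so up to sign $D$ is effective. Extracting an irreducible component --- or, when $D^2=0$, passing to the moving part of $|D|$, which defines an elliptic fibration --- yields an irreducible curve $C$ with $C^2\in\{-2,0\}$ spanning one extremal ray of $\Eff(X)$. To produce a second rational extremal ray, I fix a basis $\Pic(X)=\Z\langle C,E\rangle$ with $C\cdot E=m>0$ and $E^2=2k$, and split into cases. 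When $C^2=0$ the discriminant is $-m^2$, and a direct computation shows that $D':=-kC+mE$ satisfies $(D')^2=0$, spanning a second rational null ray. When $C^2=-2$ I must instead produce an integer solution of the Pell-type equation $a^2-mab-kb^2\in\{0,1\}$ lying on the opposite side of $\overline{Q^+}$ from $C$. By Riemann--Roch the resulting class $D'$ (or $-D'$) is effective, and $[C],[D']$ span the two extremal rays of $\Eff(X)$, which is therefore polyhedral. Theorem~\ref{thm:mds1} then concludes the proof.

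The main obstacle is the $C^2=-2$ case, where exhibiting a rational class on the opposite side of the positive cone requires genuine arithmetic input about the rank-two hyperbolic Picard lattice. One may either construct an explicit lattice isometry --- for instance, a composition of the reflection $s_C$ with a suitable element of $O(\Pic(X))$ --- or invoke the Piatetski-Shapiro-Shafarevich correspondence identifying $\Aut(X)$, up to finite index, with $O^+(\Pic(X))/W_{-2}$, where $W_{-2}$ is the Weyl group generated by reflections in $(-2)$-classes. Finiteness of this quotient for rank-two hyperbolic lattices admitting a $(-2)$- or isotropic vector is the essential input, and reduces via the theory of binary quadratic forms to the existence of the second solution sought above.
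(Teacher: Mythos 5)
First, a remark on the comparison: the paper does not prove this statement at all --- it is quoted verbatim from \cite[Theorem 5.1.5.3]{adhl} --- so there is no internal proof to measure your argument against; I can only assess your proposal on its own terms.

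Your direction (i)$\Rightarrow$(ii) is complete and correct: extremal rays of a polyhedral effective cone are rational, cannot lie in the interior of $\overline{Q^+}$, and an extremal ray strictly outside $\overline{Q^+}$ is spanned by an irreducible curve of negative self-intersection, which is a $(-2)$-curve by adjunction. In the converse direction, the isotropic case is also fine: your class $D'=-kC+mE$ does satisfy $(D')^2=0$ and is independent of $C$, so both null rays are rational, and (after the bookkeeping with effectivity via Riemann--Roch and passage to irreducible components, exactly as in the paper's Proposition~\ref{pro:mds}) the effective cone is spanned by two rational rays.

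The genuine gap is the case you yourself flag: the lattice represents $-2$ but not $0$, i.e.\ the discriminant is positive and not a perfect square. There the entire content of the theorem is the existence of a \emph{second} effective class of nonpositive square on the far side of $\overline{Q^+}$, and your proposal does not establish it --- you state the needed Pell-type fact ("produce an integer solution \dots lying on the opposite side of $\overline{Q^+}$ from $C$") and then name two strategies (an explicit isometry, or the Piatetski-Shapiro--Shafarevich description of $\Aut(X)$) without carrying either out. Note that this is not a routine verification: a single $(-2)$-class $C$ gives, via $GL_2(\mathbb Z)$-equivalence, only the solution $(1,0)$ of $a^2-mab-kb^2=1$, and producing a solution whose ray lies in the opposite spacelike region requires the existence of a nontrivial automorph of the form, i.e.\ a nontrivial solution of the associated Pell equation $u^2-\Delta v^2=4$ with $\Delta=m^2+4k$ --- a real theorem, not a formal consequence of anything you have written. (Once you have the fundamental automorph $T$, the argument does close: the orbit $T^nC$ sweeps one branch of the hyperbola $\{D^2=-2\}$ and accumulates projectively on both irrational null rays, so for $n\ll 0$ one has $T^nC\cdot H<0$ and $-T^nC$ is an effective $(-2)$-class on the opposite side of $\overline{Q^+}$; then Proposition~\ref{pro:mds} finishes.) As written, the hardest implication of the theorem is asserted rather than proved.
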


\begin{Definition}
Let $X$ be a $K3$ surface with Picard lattice $\Lambda_X$. The \textit{discriminant} $d_X$ of $X$ (or of $\Lambda_X$) is the determinant of the intersection matrix of any basis of $\Lambda_X$ multiplied by $-1$. When $\Lambda_X$ has rank two the intersection matrix and the discriminant are of the following form: 
\stepcounter{thm}
\begin{equation}\label{intmat}
   \left(\begin{array}{cc}
   2a  & b  \\
    b & 2c
\end{array}\right),
\quad
d_X = b^2-4ac.
\end{equation}   
\end{Definition}

\begin{Remark}\label{square}
There is a divisor class $D$ such that the condition $D^2=0$ holds exactly when the discriminant $d_X$ is a square.
The surface admits a divisor class $D$ such that $D^2=-2$ when the equation
$$
ax^2+bxy+cy^2=-1
$$
admits an integral solution. Moreover, after possibly applying transformations of the form $(x,y)\mapsto (x,x\pm y)$, we can assume that $-a\leq b \leq a$. Similarly, after possibly applying transformations of the form $(x,y)\mapsto (x,-y)$, we can assume that $0\leq b\leq a$.
\end{Remark}

Furthermore, we recall the following criterion when $X$ is a smooth quartic surface in $\mathbb P^3$.

\begin{Proposition}
Let $X$ be a smooth $K3$ surface of Picard rank two and intersection matrix
given by~\eqref{intmat}. Then the following are equivalent:
\begin{itemize}
\item[\rm{(i)}] ${\rm Pic}(X)$ contains a class $D$  with $D^2 = -2$; 
\item[\rm{(ii)}] the following equivalence of quadratic forms holds:
$$
 ax^2 + bxy + cy^2
 \equiv
 \begin{cases}
 -x^2 + \frac{1}{4}d_X y^2 & \text{if } b \text{ is even};\\
 -x^2 + xy + \frac{1}{4}(d_X-1) y^2 & \text{if }b \text{ is odd}.
 \end{cases}
$$
\end{itemize}
\end{Proposition}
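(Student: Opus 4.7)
The plan is to reduce the question to classical reduction of binary quadratic forms. Writing any class as $D = xe_1 + ye_2$ in a basis realizing the intersection matrix~\eqref{intmat} gives $D^2 = 2Q(x,y)$ where $Q(x,y) := ax^2 + bxy + cy^2$, so by Remark~\ref{square} condition (i) is exactly the assertion that $Q$ integrally represents $-1$. The implication (ii)$\Rightarrow$(i) is then immediate: both model forms attain the value $-1$ at $(x,y)=(1,0)$, so any $GL_2(\Z)$-equivalence pulls this back to an integral solution of $Q(x,y) = -1$, producing a class $D$ with $D^2 = -2$.

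For (i)$\Rightarrow$(ii) I would pick $(x_0,y_0) \in \Z^2$ with $Q(x_0,y_0) = -1$. This vector is primitive, since any common factor of $x_0$ and $y_0$ would have its square dividing $-1$, so it extends to an $SL_2(\Z)$-basis; in the new coordinates $Q$ takes the form $Q'(x,y) = -x^2 + b'xy + c'y^2$ for some $b',c' \in \Z$. A further shear $(x,y) \mapsto (x+ky, y)$ fixes the coefficient $-1$ of $x^2$ and replaces $b'$ by $b' - 2k$, so a suitable choice of $k$ normalizes $b'$ into $\{0,1\}$. Since the discriminant is $GL_2(\Z)$-invariant, the identity $(b')^2 + 4c' = d_X$ forces $c'$ to equal $d_X/4$ or $(d_X-1)/4$ respectively, which recovers exactly one of the two model forms.

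The only point requiring separate verification is that the normalized $b' \in \{0,1\}$ has the same parity as the original $b$, so that the correct branch of the piecewise formula is produced. This is automatic: both $b^2$ and $(b')^2$ are congruent to $d_X$ modulo $4$, which forces the parities to agree, and indeed pins down $d_X \bmod 4$ to be $0$ or $1$. I do not anticipate any further obstacle, as the whole argument is essentially the classical reduction theory of indefinite binary quadratic forms applied to the pointed form $(Q, (x_0, y_0))$.
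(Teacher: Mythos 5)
Your proof is correct and follows essentially the same route as the paper: translate $D^2=-2$ into $Q(x,y)=-1$, note primitivity of a solution, move it to $(1,0)$ by a unimodular change of basis, normalize the middle coefficient by shears, and recover the constant term from the invariance of the discriminant. The only difference is that you explicitly verify that the normalized middle coefficient has the same parity as $b$ (via $b^2\equiv (b')^2\equiv d_X \pmod 4$), a point the paper leaves implicit in "the statement follows."
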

\begin{proof}
First of all observe that the equation $D^2 = -2$ is equivalent to $ax^2 + bxy + cy^2 = -1$.
If the form is equivalent to one of the two displayed forms, then, by evaluating at $(1,0)$, it follows
that the equation $ax^2 + bxy + cy^2=-1$ admits an integral solution.

Now, assume that the equation $ax^2 + bxy + cy^2 = -1$ has an integral solution $(x_0,y_0)$. Then $\gcd(x_0,y_0) = 1$.
Acting with $GL(2,\mathbb Z)$ we can map $(x_0,y_0)$ to $(1,0)$. Then $2x^2 + bxy + cy^2$ is mapped
to $-x^2 + b'xy + c'y^2$. Applying sufficiently many linear maps of the form $(x,y) \mapsto (x,x\pm y)$
we obtain a form with the same coefficient of $x^2$ as the original form, and with the coefficient of $xy$ in $\{0,1\}$. Finally, the statement follows.
\end{proof}

\begin{thm}\label{moridreamthm}
Let $X,Y$ be two smooth $K3$ surfaces of Picard rank two such that $d_X = d_Y$. Assume that both $X$ and $Y$ contain
a class of self-intersection $2a$ with $0\leq 2a\leq 6$. Then $X$ is Mori dream if and only if $Y$ is Mori dream. 
\end{thm}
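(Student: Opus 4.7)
The plan is to combine Theorem \ref{thethm} with a finite arithmetic check on binary quadratic forms. Under the bound $0 \le 2a \le 6$, any two integer binary forms sharing the leading coefficient $a$ and a common discriminant turn out to be $GL(2,\mathbb Z)$-equivalent, so representability of $-1$ is determined purely by the pair $(a, d_X)$.

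The first step is to apply Theorem \ref{thethm}: $X$ is Mori dream if and only if $\Pic(X)$ contains a class $D$ with $D^2 \in \{-2, 0\}$, and similarly for $Y$. By Remark \ref{square} the existence of a class with $D^2 = 0$ depends only on whether the discriminant is a square, a condition that is the same for $X$ and $Y$. The case $2a = 0$ is then immediate, so from here on I assume $a \in \{1, 2, 3\}$ and reduce to comparing the existence of $(-2)$-classes.

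Next, I observe that a class with self-intersection $2a$ is automatically primitive when $a \in \{1, 2, 3\}$: for $a = 2$ a decomposition $D = 2D_0$ would force $D_0^2 = 1$, incompatible with the even lattice structure; the other cases are clear. Extending such a class to a basis of $\Pic(X)$, the intersection matrix becomes $\begin{pmatrix} 2a & b \\ b & 2c \end{pmatrix}$, and the existence of a class with $D^2 = -2$ translates to the form $f(m, n) = a m^2 + b m n + c n^2$ representing $-1$, a condition invariant under $GL(2, \mathbb Z)$. The same applies to $Y$. Using the transformations $(m, n) \mapsto (m + kn, n)$ and $(m, n) \mapsto (m, -n)$ from Remark \ref{square}, both of which preserve the leading coefficient $a$, I would normalize so that $0 \le b_X \le a$ and $0 \le b_Y \le a$.

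The heart of the argument is then the arithmetic observation that, for each $a \in \{1, 2, 3\}$, the squares $b^2$ for $b \in \{0, 1, \dots, a\}$ are pairwise distinct modulo $4a$, as can be read off from
$$
\{0, 1\} \bmod 4, \quad \{0, 1, 4\} \bmod 8, \quad \{0, 1, 4, 9\} \bmod 12.
$$
Combined with the identity $b^2 \equiv d \pmod{4a}$ forced by the discriminant, this determines the normalized value of $b$ uniquely from $a$ and $d$, and hence $c = (b^2 - d)/(4a)$ as well. Therefore the normalized forms of $f_X$ and $f_Y$ coincide, $f_X$ and $f_Y$ are $GL(2, \mathbb Z)$-equivalent, and they represent $-1$ simultaneously. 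The sole obstacle is this modular-arithmetic verification, which is precisely where the hypothesis $2a \le 6$ is needed: for $a = 4$ one already has $0^2 \equiv 4^2 \pmod{16}$, so the normalized form is no longer pinned down by the pair $(a, d)$ and the argument breaks down.
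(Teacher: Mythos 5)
Your proposal is correct and follows essentially the same route as the paper: reduce the $D^2=0$ case to the square-discriminant criterion of Remark \ref{square}, normalize both forms to a common leading coefficient $a$ with $0\le b\le a$, and use $b^2\equiv d\pmod{4a}$ together with the distinctness of $0^2,\dots,a^2$ modulo $4a$ for $a\le 3$ to conclude the forms are equivalent. The only additions are welcome bookkeeping the paper leaves implicit (primitivity of the class of self-intersection $2a$, the separate treatment of $a=0$).
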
 
\begin{proof}
If the discriminant is a square the claim follows by Remark~\ref{square}. So assume this is not the case. The intersection matrices of $X$ and $Y$
are 
$$   
 \left(\begin{array}{cc}
   2a  & b_X  \\
    b_X & 2c_X
\end{array}\right)
\text{ and }
 \left(\begin{array}{cc}
   2a  & b_Y  \\
    b_Y & 2c_Y
\end{array}\right)
$$
respectively. The corresponding quadratic forms are thus $ax^2+b_Xxy+c_Xy^2$ and $ax^2+b_Yxy+c_Yy^2$. By Remark~\ref{square} we can assume $0\leq b_X,b_Y\leq a$. On the other hand, the equality $d_X=d_Y$ 
yields $b_X^2\equiv b_Y^2\pmod {4a}$. When $a\leq 3$ all these conditions imply 
$b_X=b_Y$, so that the two intersection forms are equivalent.
\end{proof}

\begin{Example}
The assumption $a\leq 3$ in Theorem~\ref{moridreamthm} is necessary. For instance, when $a=4$ the following
intersection matrices
$$ 
 \left(\begin{array}{cc}
    8 & 4  \\
    4 & -2
\end{array}\right)
\text{ and }
 \left(\begin{array}{cc}
    8 & 0  \\
    0 & -4
\end{array}\right)
$$
define two $K3$ surfaces with Picard group of rank two and discriminant $32$. The first one is Mori dream, since it 
contains a class of self-intersection $-2$, while the second one is not Mori dream because
any class in $\Pic(X)$ has self-intersection which is a multiple of $4$.
\end{Example}

We present some families of smooth Mori dream $K3$ surfaces in $\Pj^3$ of Picard rank two. Recall that a degree $d$ surface $X = \{f = 0\}\subset \Pj^3$ is called determinantal if there exists a matrix $M$ whose entries are polynomials in the homogeneous coordinates of $\Pj^3$ and $f = \det(M)$. We recall the following result of \cite{LealLozanoMontserrat}.

\begin{thm} 
The smooth determinantal $K3$ surfaces in $\Pj^3$ are parametrized by five prime divisors 
$$\mathcal{D}_1,\dots\mathcal{D}_5\subset \mathbb{P}(H^0(\mathbb{P}^3,\mathcal{O}_{\mathbb{P}^3}(4)).$$ 
A very general surface in $\mathcal{D}_i$ has  Picard rank two and discriminant $20, 17, 16, 9, 12$ respectively.
\end{thm}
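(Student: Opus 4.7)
The plan is to classify determinantal representations of smooth quartic $K3$ surfaces in $\mathbb{P}^3$ by the degree type of the defining matrix, and for each type to compute the rank-two sublattice of $\Pic(X)$ generated by the hyperplane class $H$ and the divisor class coming from the cokernel.

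First I would recall that if $X = V(\det M)$ with $M$ an $n\times n$ matrix of homogeneous forms, then after row and column normalisations one may write $\deg m_{ij} = e_i + f_j$ for integer sequences $0 = e_1 \leq \cdots \leq e_n$ and $0 = f_1 \leq \cdots \leq f_n$ with $\sum e_i + \sum f_j = 4$. Enumerating such sequences up to transpose and discarding degenerate cases (entries of negative degree, or generically reducible determinants) produces a finite list of admissible shapes. A parameter count inside the affine space of matrices of a fixed shape, modulo the left/right block-$\mathrm{GL}$ action preserving the shape, identifies which shapes define a divisor in $\mathbb{P}(H^0(\mathbb{P}^3,\mathcal{O}(4)))$; one expects exactly five such shapes, the remaining admissible shapes producing loci of higher codimension.

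Next, for each of these five shapes I would determine the Picard lattice of a very general member. The Hilbert--Burch-type resolution
$$
0 \longrightarrow \bigoplus_{j} \mathcal{O}_{\mathbb{P}^3}(-f_j) \xrightarrow{\;M\;} \bigoplus_{i} \mathcal{O}_{\mathbb{P}^3}(e_i) \longrightarrow \mathrm{coker}(M) \longrightarrow 0
$$
exhibits $\mathrm{coker}(M)$ as a torsion sheaf scheme-theoretically supported on $X$, whose reflexive hull is a rank-one reflexive sheaf $\mathcal{L}$ on $X$. Writing $D$ for the associated divisor class, a Noether--Lefschetz-type argument combined with Theorem~\ref{thm-ni} shows that for a very general member of the shape's family $\Pic(X) = \Z H \oplus \Z D$. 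The intersection numbers $H\cdot D$ and $D^2$ then follow from the resolution by Chern-class / Porteous computations, or, more concretely, by realising $D$ on a convenient member as a classical curve of low degree lying on $X$.

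Finally, I would tabulate the resulting intersection matrices and verify that the five shapes produce exactly the discriminants $20$, $17$, $16$, $9$, $12$. The main obstacle is the case-by-case bookkeeping: proving that exactly five shapes yield codimension-one loci; establishing irreducibility and primality of each $\mathcal{D}_i$ by analysing the parameter space of matrices of a fixed shape modulo the correct equivalence; and computing $D^2$ reliably. The safest route for the last point is, for each shape, to exhibit a concrete smooth quartic containing an explicit curve realising $D$: lines, conics, twisted cubics, and elliptic quartics account for the discriminants $9$, $12$, $17$, $16$ respectively, while the value $20$ calls for a slightly less obvious realisation.
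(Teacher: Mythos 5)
This theorem is not proved in the paper at all: it is imported verbatim from \cite{LealLozanoMontserrat}, so there is no in-paper argument to compare yours against. Judged on its own terms, your outline is the standard route to this statement (going back to Beauville's correspondence between determinantal representations of a smooth surface in $\Pj^3$ and ACM curves / line bundles on it), and the concrete identifications at the end are the right ones: the five shapes correspond to quartics containing a line, a conic, a twisted cubic, an elliptic quartic, and --- for the all-linear $4\times 4$ case giving discriminant $20$ --- a projectively normal curve of degree $6$ and genus $3$, whose intersection matrix $\left(\begin{smallmatrix}4&6\\6&4\end{smallmatrix}\right)$ indeed has discriminant $20$.

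Two places in the sketch carry real weight and would need to be filled in. First, the statement that a \emph{very general} member of $\mathcal{D}_i$ has Picard rank exactly two does not follow from a parameter count alone: you need (a) irreducibility of each $\mathcal{D}_i$ (your $\mathrm{GL}$-quotient argument), (b) the existence of at least one member whose Picard lattice is exactly the rank-two lattice $\langle H, D\rangle$ --- here Theorem~\ref{thm-ni} gives an abstract $K3$ with that lattice, but one must still check (via Saint-Donat-type results on projective models and the existence of a determinantal representation attached to the curve class) that it embeds as a smooth quartic lying in $\mathcal{D}_i$ --- and (c) that $\langle H, D\rangle$ is saturated in $\Pic(X)$ for that member, since an index-$m$ overlattice would change the discriminant by $m^2$. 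Second, "one expects exactly five such shapes" should be replaced by the actual finite enumeration of degree vectors $(e_i),(f_j)$ with $\sum e_i+\sum f_j=4$ and all $e_i+f_j\geq 1$, together with a check that no two shapes produce the same divisor and that none produces a locus of codimension $\geq 2$ or a generically reducible determinant. Both points are routine but not free, and they are where the cited reference does its work.
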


\begin{Corollary}
\label{cor:det}
A very general $K3$ surface in $\mathcal{D}_1$ is not a Mori dream space while a very general $K3$ surface in $\mathcal{D}_2, \mathcal{D}_3, \mathcal{D}_4 $ or $\mathcal{D}_5$ is a Mori dream space.
\end{Corollary}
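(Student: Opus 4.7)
The plan is to apply Theorem~\ref{moridreamthm} together with Theorem~\ref{thethm} in order to reduce the statement to an arithmetic check on binary quadratic forms. Every surface in $\mathcal{D}_i$ is a quartic in $\mathbb{P}^3$, so the hyperplane class $H$ satisfies $H^2=4$; setting $2a=4$ places us in the range $0\le 2a\le 6$ required by Theorem~\ref{moridreamthm}. For any integers $b,c$ with $b^2-8c=d_i$, Theorem~\ref{thm-ni} produces a $K3$ surface $Y$ of Picard rank two with intersection matrix $\begin{pmatrix}4 & b\\ b & 2c\end{pmatrix}$. By Theorem~\ref{moridreamthm}, a very general member $X\in\mathcal{D}_i$ is Mori dream if and only if $Y$ is, and by Theorem~\ref{thethm} this reduces to whether the quadratic form $2x^2+bxy+cy^2$ represents $0$ or $-1$ over $\mathbb{Z}$.

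I would first dispose of the square-discriminant cases $\mathcal{D}_3$ and $\mathcal{D}_4$: since $d=16$ and $d=9$ are perfect squares, Remark~\ref{square} directly produces a class with $D^2=0$, and Theorem~\ref{thethm} yields Mori dreamness. For $\mathcal{D}_2$ ($d=17$) and $\mathcal{D}_5$ ($d=12$) I would exhibit explicit $(-2)$-classes: the choice $(b,c)=(1,-2)$ makes $2x^2+xy-2y^2$ take the value $-1$ at $(x,y)=(1,-1)$, and the choice $(b,c)=(2,-1)$ makes $2x^2+2xy-y^2$ take the value $-1$ at $(x,y)=(1,-1)$, so both families consist of Mori dream surfaces.

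The only case needing a genuine nonexistence argument is $\mathcal{D}_1$ ($d=20$). Here $20$ is not a square, so no class can have self-intersection $0$; by Theorem~\ref{moridreamthm} it suffices to exhibit a single pair $(b,c)$ with $b^2-8c=20$ whose form never represents $-1$. Taking $(b,c)=(2,-2)$ yields $2x^2+2xy-2y^2=2(x^2+xy-y^2)$, which takes only even values and so cannot equal $-1$. Consequently no class of square $-2$ exists either, and Theorems~\ref{thethm} and~\ref{moridreamthm} force a very general member of $\mathcal{D}_1$ to fail Mori dreamness. The main step is the initial reduction via Theorem~\ref{moridreamthm}; once it is in place, the remaining arithmetic is routine, with the only subtlety being the choice of a convenient $(b,c)$ handling the $d=20$ case.
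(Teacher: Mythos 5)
Your proof is correct and follows essentially the same route as the paper's: the square discriminants $16$ and $9$ dispose of $\mathcal{D}_3$ and $\mathcal{D}_4$ via Remark~\ref{square}, and the remaining three cases are reduced via Theorem~\ref{moridreamthm} to checking whether a single binary form of discriminant $17$, $12$ or $20$ represents $-1$, with the same parity ("values are multiples of $2$, resp.\ $4$") argument ruling out $d=20$. Your explicitly constructed comparison lattices are equivalent to the matrices the paper writes down (your $(b,c)=(2,-2)$ for $\mathcal{D}_1$ and $(2,-1)$ for $\mathcal{D}_5$ are exactly the paper's third and first matrices, which the paper's proof lists with the labels $\mathcal{D}_1$ and $\mathcal{D}_5$ apparently interchanged), so the arithmetic content is identical.
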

\begin{proof} 
The discriminant of a very general surface $X$ in $\mathcal D_3$ or in $\mathcal D_4$ is a square, so that $X$ 
is Mori dream.

For the remaining divisors we are going to apply Theorem~\ref{moridreamthm}, which allows us to check Mori dreamness for any smooth quartic of Picard rank two with prescribed discriminant. The intersection matrices of a very general surface in $\mathcal D_1, \mathcal D_2$ or $\mathcal D_5$ are
\[
 \left(
  \begin{array}{cc}
   4&2\\
   2&-2
  \end{array}
 \right),
 \:
 \left(
  \begin{array}{cc}
   4&3\\
   3&-2
  \end{array}
 \right),
 \:
 \left(
  \begin{array}{cc}
   4&2\\
   2&-4
  \end{array}
 \right)
\]
respectively. For the first two note that the equation $D^2 = -2$ admits an integral solution, while for the third one there can not be a solution since $D^2$ is a multiple of $4$.
\end{proof}

\section{Mori dream singular $K3$ surfaces}
In this section we lay the foundations to study Mori dreamness of singular $K3$ surfaces.

\begin{Definition}
A singular $K3$ surface $X$ is a normal projective surface with at most rational double points, trivial canonical class: $K_X\sim 0$ and trivial irregularity: $h^1(X,\mathcal O_X) = 0$.
\end{Definition}

\begin{Lemma}\label{res}
Let $X$ be a singular $K3$ surface and $\pi: \widetilde{X}\to X$ its minimal resolution of singularities with exceptional locus supported on the irreducible curves $E_1,\dots,E_n$. Then the following hold:
\begin{itemize}
\item[\rm{(i)}] $\widetilde{X}$ is a smooth $K3$ surface.
\item[\rm{(ii)}] $\pi^*\Pic(X) = \langle E_1,\dots,E_n\rangle^\perp$.
\item[\rm{(iii)}] $\pi_*: \Cl(\widetilde{X})\to\Cl(X)$
induces an isomorphism of finite abelian groups
$$
\Cl(\widetilde{X})/\pi^*\Pic(X)\oplus\langle E_1,\dots,E_n\rangle\simeq\Cl(X)/\Pic(X).
$$
\end{itemize}
\end{Lemma}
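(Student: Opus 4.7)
For part (i), I will verify the three defining conditions of a smooth $K3$ surface. Smoothness is automatic because $\pi$ is a resolution. Triviality of the canonical class follows from the fact that rational double points are crepant: the ramification formula gives $K_{\widetilde X} = \pi^* K_X \sim 0$. The vanishing of the irregularity uses that rational double points are rational singularities, so $R^1\pi_*\mathcal{O}_{\widetilde X}=0$; the Leray spectral sequence then forces $h^1(\widetilde X,\mathcal{O}_{\widetilde X}) = h^1(X,\mathcal{O}_X) = 0$.

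For part (ii), the inclusion $\pi^*\Pic(X) \subseteq \langle E_1,\dots,E_n\rangle^\perp$ is immediate from the projection formula, since $\pi^*D \cdot E_i = D \cdot \pi_*E_i = 0$. The reverse inclusion is the substantive step: every line bundle $L$ on $\widetilde X$ which is numerically trivial on the exceptional locus must descend to a Cartier divisor on $X$. Here I would invoke Artin's classical descent theorem for line bundles along the minimal resolution of a rational singularity, whose hypotheses are satisfied because the $E_i$ form a negative-definite ADE configuration and the singularities are rational; this makes $L$ formally trivial along the exceptional locus, hence a pull-back.

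For part (iii), I will analyze the pushforward $\pi_*\colon \Cl(\widetilde X)\to \Cl(X)$. Surjectivity is clear: each prime Weil divisor on $X$ is the image of its strict transform. The kernel equals $\langle E_1,\dots,E_n\rangle$: if $\pi_*D = \operatorname{div}(f)$ on $X$, then $D-\operatorname{div}(f\circ\pi)$ has zero pushforward, is therefore supported on the exceptional locus, and is hence an integer combination of the $E_i$. Next, I would prove $\pi_*^{-1}(\Pic(X)) = \pi^*\Pic(X)+\langle E_1,\dots,E_n\rangle$: if $\pi_*L$ is Cartier, then $L-\pi^*(\pi_*L)$ lies in $\ker\pi_*$, hence in $\langle E_i\rangle$. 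The sum is direct thanks to $\pi_*\pi^*=\mathrm{id}$ together with the negative definiteness of the intersection form on $\langle E_i\rangle$: if $\pi^*D=\sum a_iE_i$, then $D = \pi_*\pi^*D = 0$, so $\sum a_iE_i=0$ in $\Pic(\widetilde X)$, forcing all $a_i$ to vanish. The asserted isomorphism then falls out by factoring $\pi_*$ through the quotient by $\Pic(X)$, and finiteness is a consequence of the rank identity $\rho(\widetilde X) = \rho(X)+n$.

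I expect the main obstacle to be the descent step in part (ii), which depends on the specific structure of rational double points rather than on any formal manipulation; once that is granted, the class-group computation in part (iii) reduces to routine bookkeeping via the projection formula and the negative definiteness of the exceptional intersection form.
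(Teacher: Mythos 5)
Your proposal is correct and follows essentially the same route as the paper: crepant resolution plus rationality of the double points for (i), Artin's theory of rational surface singularities to descend line bundles orthogonal to the exceptional curves for (ii), and the $\pi^*$-split exact sequence $0\to\langle E_1,\dots,E_n\rangle\to\pi_*^{-1}\Pic(X)\to\Pic(X)\to 0$ for (iii). Your version of (iii) just spells out the kernel computation and surjectivity of $\pi_*$ that the paper leaves implicit.
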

\begin{proof}
To prove (i) it suffices to show that $K_{\widetilde{X}}\sim 0$ and $h^1(\widetilde{X},\mathcal O_{\widetilde{X}})=0$. Since $X$ has only rational double points the resolution is crepant, so that 
$K_X\sim 0$ implies $K_{\widetilde{X}}\sim 0$. The vanishing of $H^1(\widetilde{X},\mathcal O_{\widetilde{X}})$ is a consequence of $R^1\pi_*\mathcal O_X=0$, which holds since $X$ has rational singularities, and of $h^1(X,\mathcal O_X)=0$.

We prove (ii). The inclusion $\pi^*\Pic(X)\subseteq \langle E_1,\dots,E_n\rangle^\perp$ is clear. We show the opposite inclusion. Since $X$ has rational singularities if $D$ is a divisor in $\widetilde{X}$, whose class is orthogonal to the exceptional locus of $\pi$, then $D$ restricts trivially on each effective curve supported along this locus. Moreover,
by Artin's contractibility criterion $D\sim D'$, where the support of $D'$ is disjoint from the exceptional locus. Thus $D$ is the pullback of a Cartier divisor on $X$.

Finally, we show (iii). The exact sequence
$$
 \xymatrix{
  0\ar[r] &
  \langle E_1,\dots,E_n\rangle\ar[r] &
  \pi_*^{-1}\Pic(X)\ar[r]^-{\pi_*} &
  \Pic(X)\ar[r]\ar@/^1pc/[l]^-{\pi^*} &
  0
 }
$$
admits a splitting given by the pullback. Thus the central term is the direct sum
of the other two and the statement follows.
\end{proof}

\begin{Remark}
The map $\Pic(\widetilde{X})\to \langle E_1,\dots,E_n\rangle^*$, given by $D\mapsto \langle D,-\rangle$ is not necessarily surjective. For instance, if $\widetilde{X}$ has Picard group
of rank two with intersection matrix
$$
 \begin{pmatrix*}[r]
  4&0\\
  0&-2
 \end{pmatrix*},
$$
then $\widetilde{X}$ is the resolution of a singular quartic surface $X$ of 
$\mathbb P^3$ whose singular locus consists of an ordinary double point. In this case 
$\Pic(X) = \langle H\rangle$, where $H^2=4$ and the exceptional locus consists of just one $(-2)$-curve $E$ coming from the resolution of the singularity. Thus 
$$\pi^*\Pic(X)\oplus \langle E\rangle = \Pic(\widetilde{X})$$ 
since the two lattices have the same rank and intersection matrix. It follows 
that $\Cl(X) = \Pic(X)$.

On the other hand $\langle E\rangle^*$ contains a linear form whose value on $E$ is $1$. Such a linear form can not come from $\Pic(\widetilde{X})$ since the intersection product with $E$ of any element of this lattice is an even number. 
\end{Remark}

\begin{Definition}
A sublattice $L\subseteq\Lambda_{K3}$ is a Mori dream lattice if it belongs to the list
given in~\cite[Theorem 5.1.5.3]{adhl}.
\end{Definition}

The following result generalizes~\cite[Proposition 2.10]{gar}. Indeed we have no assumption on how the $(-1)$-curves of the minimal resolution intersect the exceptional locus.

\begin{theorem}\label{mdsk3}
A singular $K3$ surface whose Picard lattice is Mori dream is a Mori dream surface.
\end{theorem}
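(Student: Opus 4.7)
Plan: My approach is to use Theorem~\ref{thm:mds1} and reduce Mori dreamness of $X$ to the polyhedrality of its pseudo-effective cone, then transfer that polyhedrality from a smooth $K3$ surface which abstractly realizes the Picard lattice $\Lambda_X$.

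First I take the minimal resolution $\pi : \widetilde X \to X$, with exceptional divisors $E_1,\dots,E_n$. By Lemma~\ref{res}, $\widetilde X$ is a smooth $K3$ surface and the pullback $\pi^*$ identifies $\Pic_\Q(X)$ isometrically with the subspace
\[
V \;:=\; \langle E_1,\dots,E_n\rangle^\perp_\Q \;\subset\; \Pic_\Q(\widetilde X).
\]
A direct check, using that the pullback of an effective Cartier divisor is effective and that any class in $V \cap \Eff(\widetilde X)$ can be written as $\pi^*(\pi_*\widetilde D)$ with $\pi_*\widetilde D$ an effective ($\Q$-Cartier) class on the $\Q$-factorial surface $X$, gives the identification
\[
\pi^* \Eff(X) \;=\; V \cap \Eff(\widetilde X).
\]
So it is enough to prove that $V \cap \Eff(\widetilde X)$ is rational polyhedral.

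Next, since $\Lambda_X$ is a Mori dream lattice, it appears in the list of \cite[Theorem~5.1.5.3]{adhl}, and Theorem~\ref{thm-ni} produces a smooth $K3$ surface $Y$ with $\Pic(Y) \cong \Lambda_X$. By the very definition of Mori dream lattice, $Y$ is a Mori dream surface, and by Theorem~\ref{thm:mds1} the effective cone $\Eff(Y) \subset \Pic_\Q(Y)$ is rational polyhedral. Transporting along the lattice isometry $\Pic_\Q(Y) \cong V$, I aim to match $\Eff(Y)$ with $V\cap\Eff(\widetilde X)$; then $\Eff(X)$ inherits rational polyhedrality and Theorem~\ref{thm:mds1} concludes that $X$ is Mori dream.

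The main obstacle is precisely this matching. The smooth surface $\widetilde X$ need not be Mori dream -- Example~\ref{SpecF} explicitly records non Mori dream smooth $K3$ surfaces specializing to Mori dream singular ones -- so $\Eff(\widetilde X)$ can have infinitely many extremal rays, and one cannot simply restrict a polyhedral cone to a subspace. The resolution of this difficulty uses the standard presentation of the effective cone of a $K3$ surface as the convex hull of the closure of the positive cone of its Picard group with the rays spanned by classes of irreducible $(-2)$-curves: the positive cone of $V$ coincides with the positive cone of $\Pic_\Q(Y)$ under the lattice isometry (the intersection forms agree by construction), and the irreducible $(-2)$-curves on $\widetilde X$ contribute to $V\cap\Eff(\widetilde X)$ only through components whose exceptional parts cancel, producing effective classes on $X$ whose images in $V$ are precisely the $(-2)$-classes of $\Lambda_X$ that generate $\Eff(Y)$ under the Mori dream hypothesis. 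Making this bijection between the wall classes of the two cones precise -- using the Nikulin/Torelli framework for $Y$ on one side and the orthogonality condition defining $V$ on the other -- is the technical heart of the proof, and once it is carried out the conclusion follows from Theorem~\ref{thm:mds1}.
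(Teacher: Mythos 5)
Your reduction to the polyhedrality of $V\cap\Eff(\widetilde X)$, where $V=\langle E_1,\dots,E_n\rangle^{\perp}_{\Q}$, is fine, but the step you yourself flag as ``the technical heart'' --- matching $\Eff(Y)$ with $V\cap\Eff(\widetilde X)$ under the lattice isometry --- is not carried out, and the sketch you give of it rests on a false premise. The effective cone of $X$ is governed by the full divisor class group $\Cl(X)$, which by Lemma~\ref{res}(iii) is in general \emph{strictly larger} than $\Pic(X)=\Lambda_X$. An irreducible curve on $X$ passing through a singular point has strict transform a $(-2)$-curve on $\widetilde X$ meeting the exceptional locus, and its Mumford pullback is a class of $V$ with fractional self-intersection strictly between $-2$ and $0$; such a class is not a $(-2)$-class of $\Lambda_X$ and has no counterpart in $\Eff(Y)$. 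Example~\ref{SpecF} exhibits exactly this: the classes $E_1+\tfrac12(E_2+E_3)$ and its companion lie in $V$, are effective, and have square $-1$, while $\Lambda_X=\mathrm{diag}(4,-8)$ contains no class of square $-2$ or $0$ at all. So the asserted ``bijection between the wall classes of the two cones'' does not exist; what one can prove (via Riemann--Roch on $\widetilde X$, cf.\ Proposition~\ref{pro:eff}) is only the inclusion $\Eff(Y)\subseteq V\cap\Eff(\widetilde X)$, and polyhedrality of a cone does not pass to a strictly larger cone containing it. Controlling the extra extremal rays coming from $\Cl(X)\smallsetminus\Pic(X)$ is precisely the hard part, and your argument gives no handle on it.

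The paper's proof avoids the resolution and the abstract surface $Y$ altogether. Using periods and the surjectivity of the global Torelli map, it builds a one-parameter family $\mathcal X\to\Delta$, polarized by an ample class $H\in L_X$, whose very general fibre is a \emph{smooth} $K3$ with Picard lattice $L_X$ (hence Mori dream by hypothesis, hence with polyhedral effective cone) and whose special fibre is the singular surface $X$ itself. The key imported ingredient is then \cite[Proposition 1.3]{lu}, which transfers polyhedrality of the effective cone from the very general fibre to the special fibre of such a family; semiampleness of nef classes on $X$ then follows from the base point free theorem. That specialization result is exactly what substitutes for the cone-matching you would otherwise have to prove, and it is the ingredient missing from your argument. (A minor additional point: Theorem~\ref{thm:mds1} is stated for smooth $K3$ surfaces, so even granting polyhedrality of $\Eff(X)$ you still need to verify that nef classes on the singular $X$ are semiample, as the paper does.)
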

\begin{proof}
Let $\Delta\subset\mathbb{C}$ be a disc around the origin. Choose a $1$-parameter family
of period $\omega_t\in T_X\otimes\mathbb C$, whit $t\in\Delta$, such
that 
$$\omega_t^\perp\cap\Lambda_{K3} = L_X,$$ 
for a very general $t\in\Delta$, and 
$$\omega_0^\perp\cap\Lambda_{K3} = L_{\widetilde{X}}.$$ 
By the surjectivity of the global Torelli map this corresponds to a family of $K3$ surfaces over $\Delta$ whose very general member has Picard lattice isometric to $L_X$ and whose 
special fiber over $0\in\Delta$ has Picard lattice isometric to $L_{\widetilde{X}}$.

The choice of an ample class $H\in L_X$ produces a polarized family $\mathcal{X}\rightarrow\Delta$ whose very general member is a smooth projective $K3$ surface $\mathcal X_t$ with Picard lattice isometric to $L_X$ and whose special fiber is isomorphic to the singular $K3$ surface $X$. By hypothesis the very general member is a Mori Dream surface, so that its effective cone is polyhedral. 

By~\cite[Proposition 1.3]{lu} it follows that the effective cone of the central fiber $\mathcal X_0 \simeq X$ is polyhedral as well. On the other hand, if $D$ is a 
nef divisor class on $X$, then $D$ is semiample by the base point free theorem, so $X$ is a Mori dream surface.
\end{proof}

The proof of Theorem~\ref{mdsk3} yields that if a singular $K3$ surface $X$ is a specialization of smooth Mori dream $K3$ surfaces then $X$ is Mori dream. Hence, it allows us to produce singular Mori dream $K3$ surfaces as limits of smooth Mori dream $K3$ surfaces.

On the other hand, not all singular $K3$ surfaces which are Mori dream can be obtained in this way, as shown by the following example.

\begin{Example}\label{SpecF}
Let $\widetilde{X}$ be a $K3$ surface whose Picard
lattice is isometric to the following
$$
\begin{pmatrix*}[r]
  4 & 2 & 0 & 0 \\
  2 & -2 & 1 & 1 \\
  0 & 1 & -2 & 0 \\
  0 & 1 & 0 & -2
\end{pmatrix*}.
$$
Such a surface exists by~\cite[Corollary 3.1]{hu}. Denote by $(H,E_1,E_2,E_3)$ the 
corresponding basis of the Picard lattice. Up to isometry we can assume $H$ to be nef and the remaining three curves to be $(-2)$-curves.

Note that $H$ has even intersection product with any other class in the Picard group. The classes which have intersection product two with $H$ are of the form $D = E_1+a(H-2E_1)+bE_2+cE_3$, with $a,b,c\in\mathbb Z$. The equation $D^2=0$ defines an ellipsoid with no integral points.

Thus we have a birational morphism $\pi\colon {\widetilde{X}}\to X$ and, since $E_2$ and $E_3$ are orthogonal to $H$ and disjoint, it follows that $X$ has two simple double points given by the contraction of these curves. The surface has no other singular points 
since $H^\perp = \langle H-2E_1,E_2,E_3\rangle$ and the equation $(a(H-2E_1)+bE_2+cE_3)^2=-2$ defines an ellipsoid with only two integral points on it.

By Lemma~\ref{res} the Picard lattice of $X$ is $\langle H,-H+2E_1+E_2+E_3\rangle
= \langle E_2,E_3\rangle^\perp$, and it has the following intersection matrix
$$
 \begin{pmatrix*}[r]
  4&0\\
  0&-8
 \end{pmatrix*}.
$$
A smooth element $\mathcal{X}_t$ of the family $\mathcal{X}\to\Delta$ is a smooth $K3$ surface with Picard lattice given by the above $2\times 2$ matrix. By Theorem \ref{thethm} such a surface is not Mori dream since it does not contain classes of self-intersection $0$ or $-2$.

On the other hand, the classes $E_1$ and $H-E_1-E_2-E_3$ are effective since they are of 
self-intersection $-2$ and their intersection with $H$ is $2>0$. Moreover, both are irreducible due to the last intersection value. An easy calculation shows that the 
images of both curves in $X$ have self-intersection $-1$. Thus $X$ is a Mori dream space
by Proposition~\ref{pro:mds}.
\end{Example}

\section{Mori dream $K3$ surfaces of Picard rank two}
In this section we address the case of $K3$ surfaces of Picard rank two an whose singular locus consists of an $A_n$-point.
\begin{Lemma}\label{sa}
Let $X$ be a $\mathbb{Q}$-factorial projective variety and $f: Y\to X$ a resolution of singularities. If every nef class on $Y$ is semiample then the same holds for $X$.
\end{Lemma}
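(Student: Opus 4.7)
The plan is to reduce semiampleness on $X$ to semiampleness on the resolution $Y$ by pulling back, using the hypothesis there, and then pushing the sections back down via $f_*\mathcal{O}_Y=\mathcal{O}_X$.

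\medskip

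More precisely, let $D$ be a nef $\mathbb{Q}$-Cartier divisor on $X$. Since $X$ is $\mathbb{Q}$-factorial, after replacing $D$ by a positive multiple I may assume $D$ is Cartier. First I would check that $f^*D$ is nef on $Y$: for any irreducible curve $C\subset Y$, either $C$ is contracted by $f$, in which case $f^*D\cdot C=0$, or $f_*C$ is an effective curve on $X$, in which case $f^*D\cdot C=D\cdot f_*C\geq 0$ by the projection formula and the nefness of $D$. By hypothesis, $f^*D$ is then semiample on $Y$, so some further positive multiple $f^*(mD)=f^*D^{\otimes m}$ is base point free.

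\medskip

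The second step is to transfer this base point freeness down to $X$. Because $f$ is a birational morphism of normal varieties, $f_*\mathcal{O}_Y=\mathcal{O}_X$, and the projection formula gives
\[
f_*\mathcal{O}_Y(f^*(mD))\;\cong\;\mathcal{O}_X(mD)\otimes f_*\mathcal{O}_Y\;\cong\;\mathcal{O}_X(mD),
\]
hence the pullback map
\[
H^0(X,\mathcal{O}_X(mD))\;\xrightarrow{\;f^*\;}\;H^0(Y,\mathcal{O}_Y(f^*(mD)))
\]
is an isomorphism. Given a point $x\in X$, I would choose any $y\in f^{-1}(x)$ and, by base point freeness upstairs, pick a section $s\in H^0(Y,f^*(mD))$ with $s(y)\neq 0$. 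Writing $s=f^*s'$ for the unique $s'\in H^0(X,mD)$, if $s'$ vanished at $x$ then $f^*s'$ would vanish at $y$, a contradiction. Hence $s'(x)\neq 0$, so $mD$ is base point free at $x$; varying $x$, $mD$ is base point free on $X$, i.e.\ $D$ is semiample.

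\medskip

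There is no real obstacle here beyond bookkeeping. The only points that require a touch of care are the reduction to a Cartier multiple (which needs $\mathbb{Q}$-factoriality of $X$) and the identification $f_*\mathcal{O}_Y=\mathcal{O}_X$ (which needs $X$ normal and $f$ birational proper, both of which are built into the hypotheses since $f$ is a resolution of singularities of a projective, hence normal, $\mathbb{Q}$-factorial variety).
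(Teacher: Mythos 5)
Your proof is correct, and the first half (pull back $D$, check nefness of $f^*D$ via the projection formula, invoke the hypothesis on $Y$) is exactly what the paper does. Where you diverge is in descending the base point freeness of $|f^*(mD)|$ to $X$: the paper picks a member $D'$ of a large multiple of $|f^*D|$ that is disjoint from $\Exc(f)$ and pushes it forward, implicitly using that the semiample fibration defined by $f^*D$ contracts the $f$-exceptional curves, so that a general member misses the (finitely many, in the surface case) images of the exceptional components. You instead use $f_*\mathcal{O}_Y=\mathcal{O}_X$ and the projection formula to identify $H^0(X,\mathcal{O}_X(mD))$ with $H^0(Y,\mathcal{O}_Y(f^*(mD)))$, and then transfer freeness pointwise through the surjection $f$. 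Your route is slightly longer to write but more robust: it makes no reference to the geometry of the exceptional locus, so it works verbatim in any dimension, whereas the paper's "disjoint from the exceptional locus" step is cleanest when $\Exc(f)$ is contracted to finitely many points by the semiample fibration (as happens for the surfaces the paper cares about). Both arguments are valid for the statement as used in the paper.
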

\begin{proof}
Let $D$ be a nef divisor on $X$. Then $f^*D$ is also nef, and by assumption it is semiample. Hence, after 
possibly replacing $f^*D$ with a sufficiently large multiple, we get a divisor $D'$ that is disjoint from the exceptional locus of $f$. Consequently, $D = f_*f^*D$ is semiample.
\end{proof}

\begin{Remark}\label{lu2}
We will need the following observation: the proof of ~\cite[Proposition 1.2]{lu} for the case 
of Picard rank two works verbatim as in the case of higher Picard rank.
\end{Remark}

We will make extensive use of the following result.

\begin{Proposition}\label{pro:mds}
Let $X$ be a, possibly singular, projective $K3$ surface of Picard rank two. Then the following are equivalent:
\begin{itemize}
\item[(i)] $X$ is a Mori dream space.
\item[(ii)] There are two effective divisors $D_1,D_2$ of $X$ such that $D_i^2\leq 0$ and $D_1\cdot D_2>0$.
\end{itemize}
\end{Proposition}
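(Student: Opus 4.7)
For the forward implication $(i) \Rightarrow (ii)$: since $X$ is Mori dream, $\Eff(X)$ is polyhedral, and in Picard rank two it has exactly two extremal rays, spanned by effective classes $D_1, D_2$. An effective divisor with positive self-intersection on a surface is big and hence interior to the pseudo-effective cone, so extremality forces $D_i^2\leq 0$. Writing an ample class $H=aD_1+bD_2$ with $a,b>0$ (which holds because $H$ lies in the interior of $\Eff(X)=\cone(D_1,D_2)$: were $a=0$ then $H=bD_2$ would satisfy $H^2=b^2D_2^2\leq 0$, contradicting $H^2>0$), the inequality $0<H\cdot D_1=aD_1^2+bD_1\cdot D_2$ combined with $aD_1^2\leq 0$ yields $D_1\cdot D_2>0$.

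For $(ii) \Rightarrow (i)$, I would check the conditions of Definition~\ref{def:MDS}. Finite generation of $\Pic(X)$ is automatic for a singular $K3$, and since a surface admits no nontrivial small $\mathbb{Q}$-factorial modifications the third condition is vacuous. It thus suffices to show that every nef class on $X$ is semiample and that $\Eff(X)$ is rational polyhedral. For semiampleness: by Lemma~\ref{res} the minimal resolution $\pi\colon\widetilde{X}\to X$ has $\widetilde{X}$ a smooth $K3$ surface, on which every nef divisor is semiample (via the base-point-free theorem for big-and-nef classes and the elliptic fibration description of nef classes of square zero), and Lemma~\ref{sa} transfers semiampleness down to $X$.

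For the polyhedrality, a signature-$(1,1)$ computation in $\Cl_\R(X)$ shows that $D_1, D_2$ lie on opposite sides of the positive cone $\{D:D^2>0,\ D\cdot H>0\}$: two effective classes in the same closed complement of the positive cone have non-positive intersection (with equality only for proportional isotropic classes), contradicting $D_1\cdot D_2>0$. Consequently $\cone(D_1,D_2)$ contains the closed positive cone, and it remains to exhibit a rational extremal generator of $\Eff(X)$ on each side. If $D_i^2=0$, then $D_i$ itself spans a rational null ray. If $D_i^2<0$, decomposing $D_i=\sum n_jC_j$ into irreducible components, the estimate $\sum n_j^2 C_j^2\leq D_i^2<0$ (using that distinct irreducibles intersect non-negatively) forces some $C_j^2<0$, and a weighted-slope argument shows that at least one such irreducible negative component lies on the same side of the positive cone as $D_i$. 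Uniqueness of the negative curve on each side follows because two distinct proportional irreducible curves would intersect negatively, impossible for distinct irreducibles, so this curve generates the extremal ray of $\Eff(X)$ on the $D_i$ side. The main obstacle is precisely this weighted-slope step, namely ensuring that the irreducible negative component produced from the decomposition lies on the same side as $D_i$ rather than on the opposite one; this hinges on the fact that the slopes of effective directions lying in the opposite closed wedge, the closed positive cone, and the null rays are all bounded by the null-ray slope nearest to $D_i$, so no positive combination of such directions can realise the strictly more extreme slope of $D_i$.
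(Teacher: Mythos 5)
Your argument is correct in substance and, for the implication (ii)$\Rightarrow$(i), takes a genuinely different route from the paper on the one point where the two could differ. The forward direction is equivalent to the paper's: you extract $D_1\cdot D_2>0$ from the decomposition of an ample class, while the paper uses the Hodge index theorem on $C_1^\perp$ to place $C_2$ in the positive half-plane; both are fine. For the converse, the semiampleness half is identical (resolution plus Lemma~\ref{sa}), but for polyhedrality the paper passes to irreducible components $C_1,C_2$ of $D_1,D_2$ whose cone contains the light cone and then invokes Remark~\ref{lu2}, i.e.\ the external Proposition~1.2 of~\cite{lu}, whereas you argue directly that each boundary ray of $\Eff(X)$ is spanned by a rational class. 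Two remarks on this. First, the step you single out as ``the main obstacle'' --- producing an irreducible component of negative square lying on the \emph{same} side of the positive cone as $D_i$ --- is exactly the step the paper also needs (its sentence ``As a consequence two irreducible components \dots generate a cone which contains $Q$'') and leaves unjustified; your convexity argument (all irreducible components pair positively with an ample class, and the union of the closed positive cone with the opposite wedge is, inside that half-plane, a convex cone bounded by the null ray nearest $D_i$) is the right justification, so your version is if anything more complete here. Second, the one genuine imprecision is the case $D_i^2=0$: you assert that $D_i$ ``itself spans a rational null ray,'' but that ray need not be extremal in $\Eff(X)$ --- if $D_i$ has an irreducible component $C$ with $C\cdot D_i<0$ (necessarily $C^2<0$), the effective cone extends strictly beyond the null ray of $D_i$. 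This is harmless, since one then falls into your negative-curve case and $C$ spans the (rational) boundary ray, but as written your case analysis does not cover it; you should add the reduction explicitly. With that patch, and with the observation that a unique irreducible negative curve on a given side forces $\overline{\Eff}(X)\subseteq\cone(C,R)$ for $R$ a ray of $C^\perp$ inside the positive cone (which is what actually makes $C$ extremal, rather than uniqueness per se), your self-contained proof buys independence from~\cite{lu} at the cost of a longer lattice-theoretic discussion.
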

\begin{proof}
We prove (i)$\Rightarrow$(ii). If $X$ is Mori dream then there are two irreducible curves $C_1$, $C_2$ whose classes generate the effective cone of $X$. Thus $C_i^2\leq 0$ for $i=1,2$. By the Hodge index theorem the intersection form on 
$C_1^\perp$ is non-negative definite.

It follows that $C_1^\perp$ intersects the interior of the light cone $Q$, so that the class of $C_2$ lies in
the positive half-plane defined by $C_1$. Thus $C_1\cdot C_2>0$.

We prove (ii)$\Rightarrow$(i). The hypothesis implies that the cone generated by the classes of 
$D_1$ and $D_2$ contains the light cone $Q$. Indeed if $D = uD_1+vD_2$, then 
$$D^2 = u^2D_1^2+2uvD_1\cdot D_2+v^2D_2^2$$ 
can be positive only if $uv>0$ (if both $u$ and $v$ are negative then $D$ can not be effective). As a consequence two irreducible components $C_1$ and $C_2$ in the support of $D_1$ and $D_2$ generate a cone which contains $Q$.

Thus, Remark \ref{lu2} yields that the effective cone of $X$ is polyhedral. Since the resolution of $X$ 
is a smooth $K3$ surface, by Lemma~\ref{sa}, every nef divisor on $X$ is semiample and hence $X$ is Mori dream.
\end{proof}

The following result will help us to find divisors $D_1, D_2$ on a singular $K3$ surface $X$ satisfying (ii) in Proposition \ref{pro:mds}.

\begin{Lemma}\label{FindingAnotherCurve}
Let $X$ be a possibly singular $K3$ surface of Picard rank two. Assume that $X$ does not contain divisors
of self-intersection zero. Let $D_1$ be a divisor on $X$ with negative self-intersection. Then there exists a divisor $D_2$, not linearly equivalent to $-D_1$, such that
$$
D_2^2 = D_1^2 
\quad \text{and} \quad 
D_1 \cdot D_2 > 0.
$$
\end{Lemma}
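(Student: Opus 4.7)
The plan is to produce $D_2$ as the image of $-D_1$ under a suitable infinite-order isometry of the Picard lattice. First I would observe that $\Lambda_X=\Pic(X)$ is an indefinite rank-two lattice (its signature is $(1,1)$ by the Hodge index theorem), and that the hypothesis that $X$ contains no divisor of self-intersection zero forces, via Remark~\ref{square}, the discriminant $d_X$ to be a positive non-square integer.

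Under this arithmetic condition Pell's theorem furnishes infinitely many integer solutions $(u,v)$ of $u^2-d_Xv^2=4$, and each such solution produces a lattice isometry $T\in O(\Lambda_X)$ of infinite order; in the basis realising the intersection matrix $\bigl(\begin{smallmatrix}2a&b\\b&2c\end{smallmatrix}\bigr)$ one can take
\[
T \;=\; \frac{1}{2}\begin{pmatrix} u-bv & -2cv \\ 2av & u+bv\end{pmatrix},
\]
whose eigenvalues form a reciprocal pair $\lambda,\lambda^{-1}\in\mathbb R\setminus\{\pm1\}$. After replacing $T$ by $T^2$ if necessary, I may assume $\lambda>0$.

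With $T$ in hand I would set $D_2:=-T(D_1)$. Then $D_2^2=D_1^2$ because $T$ is an isometry, and the equality $D_2=-D_1$ would force $D_1$ to be a $+1$-eigenvector of $T$, which does not exist. It remains to verify $D_1\cdot D_2>0$: this is a short computation in the real eigenbasis $(e_+,e_-)$ of $T$. Since a $T$-invariant quadratic form must be proportional to $uv'+u'v$ in the coordinates dual to $(e_+,e_-)$, writing $D_1=a_+e_++a_-e_-$ and using $D_1^2=2\alpha\,a_+a_-<0$ yields
\[
D_1\cdot D_2 \;=\; -\langle D_1,T(D_1)\rangle \;=\; -\frac{D_1^2}{2}\bigl(\lambda+\lambda^{-1}\bigr)\;>\;0.
\]
The only delicate step is the construction of $T$; once that is in place the rest of the argument is routine linear algebra in the eigenbasis.
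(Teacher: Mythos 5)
Your argument is correct, but it takes a genuinely different route from the paper's. The paper clears denominators to reduce $D^2=D_1^2$ to an integral equation $Q(x,y)=N$, invokes the fact that an indefinite form of non-square discriminant representing $N$ represents it infinitely often, and then excludes the finitely many solutions lying on the two lines $D\cdot D_1=0$ and $D\cdot D_1=\beta$ for each fixed $\beta$; positivity of $D_1\cdot D_2$ is then arranged by possibly replacing $D_2$ with $-D_2$. You instead construct an explicit infinite-order isometry $T$ (the Pell automorph of the form) and set $D_2=-T(D_1)$, verifying $D_1\cdot D_2>0$ by a direct eigenbasis computation. Both proofs ultimately rest on the same Pell-type arithmetic (non-square positive discriminant $\Rightarrow$ nontrivial solutions of $u^2-d_Xv^2=4$), but yours is more constructive and yields the sign of $D_1\cdot D_2$ without a case adjustment, at the cost of the matrix bookkeeping. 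One small point to tidy up: for a \emph{singular} $K3$ surface the intersection form on the rank-two class group is only $\mathbb{Q}$-valued (see the tables in Section 5 of the paper), so the Gram matrix need not be of the even integral shape $\bigl(\begin{smallmatrix}2a&b\\b&2c\end{smallmatrix}\bigr)$ you write down; you should first rescale to a primitive integral form $Q$ proportional to the intersection form and take $T$ to be an automorph of $Q$ (which then preserves the rational form as well). With that adjustment, and the observation that $T\neq\pm I$ because not both $a$ and $c$ can vanish when the discriminant is not a square, the proof is complete.
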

\begin{proof}
Set $\alpha := D_1^2$, and write the class of $D_1$ with respect to a basis of the divisor class group of $X$. Then, after clearing denominators, the equation $D^2 = \alpha$ takes the form $Q(x,y) = N$, where 
$$
Q(x, y) = a x^2 + b x y + c y^2
$$
is a binary quadratic form and $a, b, c, N\in\mathbb{Z}$. By the Hodge index theorem the discriminant $\Delta := b^2 - 4ac$ is positive. Moreover, the hypothesis that $X$ does not contain divisors with self-intersection zero implies 
that $\Delta$ is not a square. It follows that if the equation $Q(x, y) = N$ has an integral solution then it has infinitely many solutions~\cite[Corollary 11.14]{int}.

On the other hand, given $\alpha, \beta \in \mathbb{Q}$, there are only a finite number of classes $D$ satisfying
$$
D^2 = \alpha \text{ and } D \cdot D_1 = \beta,
$$
since the second equation implies that $D$ lies on a line in the affine plane with coordinates $x, y$, and 
the first equation can have at most two roots on such a line. Hence, there exists a $D_2$ such that 
$D_2^2 = \alpha$, $D_2 \notin \{D_1, -D_1\}$, and $D_1 \cdot D_2 \neq 0$. After possibly replacing $D_2$ with $-D_2$, we can assume $D_1 \cdot D_2 > 0$, proving the statement.
\end{proof}

We now have to address the effectiveness of the divisor $D_2$ whose existence is guaranteed by Lemma \ref{FindingAnotherCurve}. Let $D_1$ and $D_2$ be divisors as in Lemma \ref{FindingAnotherCurve}, in many cases we can prove that $D_2$ is effective provided that $D_1$ is. 

Let $X$ be a singular $K3$ surface and $\pi:\widetilde{X}\to X$ a resolution. Given a Weil divisor $D$ on $X$ we denote by $\bar D$ its strict transform and by $\pi^*D$ its pullback, as defined in~\cite{mum}. Set 
$$
E_D := \pi^*D - \bar{D}
\qquad
\text{ and }
\qquad   
   \{E_D\} := E_D - \lfloor E_D\rfloor.
$$

\begin{Proposition}\label{pro:eff}
With the above notation, if $D^2+\{E_D\}^2 > -4$ then either $D$ or $-D$ is linearly equivalent to an effective divisor.
\end{Proposition}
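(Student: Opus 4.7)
The plan is to reduce the statement to an application of Riemann--Roch on the smooth minimal resolution $\widetilde X$, which is a $K3$ surface by Lemma \ref{res}. The key is to move from $D$, a Weil divisor on the singular surface $X$, to a carefully chosen integral Cartier divisor on $\widetilde X$ whose self--intersection is exactly $D^2+\{E_D\}^2$.

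First I would introduce the divisor
$$
D' := \bar D + \lfloor E_D\rfloor = \pi^*D - \{E_D\},
$$
which is a genuine integral divisor on $\widetilde X$. Using that the Mumford pullback $\pi^*D$ is by construction numerically orthogonal to every exceptional curve (so in particular to the exceptional $\mathbb Q$--divisor $\{E_D\}$), and that $(\pi^*D)^2=D^2$ by the projection formula, I would compute
$$
(D')^2 = (\pi^*D)^2 - 2\,\pi^*D\cdot\{E_D\} + \{E_D\}^2 = D^2 + \{E_D\}^2.
$$

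Next, since $\widetilde X$ is a smooth $K3$ surface, $K_{\widetilde X}\sim 0$ and Riemann--Roch together with Serre duality give
$$
h^0(\widetilde X, D') + h^0(\widetilde X, -D') \,\geq\, \chi(\widetilde X, D') \,=\, 2 + \tfrac{1}{2}(D')^2 \,=\, 2 + \tfrac{1}{2}\bigl(D^2 + \{E_D\}^2\bigr).
$$
Under the hypothesis $D^2 + \{E_D\}^2 > -4$ the right-hand side is strictly positive, so either $D'$ or $-D'$ is linearly equivalent to an effective divisor on $\widetilde X$.

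Finally, I would push the resulting effectivity down to $X$. If $D'\sim D''$ with $D''\geq 0$, then $\pi_*D''\geq 0$, and since $\pi_*\bar D = D$ while $\pi_*\lfloor E_D\rfloor=0$, we have $\pi_*D' = D$, hence $D\sim \pi_*D''$ is effective; symmetrically for $-D'$. There is no real obstacle in this argument: the only subtlety is the identity $(D')^2 = D^2+\{E_D\}^2$, which relies on the defining orthogonality property of the Mumford pullback and explains why the fractional part $\{E_D\}$, rather than $E_D$ itself, appears in the hypothesis.
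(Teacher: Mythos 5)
Your proof is correct and follows essentially the same route as the paper: both introduce the integral divisor $D'=\bar D+\lfloor E_D\rfloor=\pi^*D-\{E_D\}$ on the minimal resolution, use the orthogonality of the Mumford pullback to the exceptional locus to get $(D')^2=D^2+\{E_D\}^2$, apply Riemann--Roch and Serre duality on the smooth $K3$ surface $\widetilde X$ to conclude that $D'$ or $-D'$ is effective, and push forward via $\pi_*D'=D$. The only cosmetic difference is that the paper first observes $(D')^2$ is an even integer, hence at least $-2$, whereas you work directly with $\chi(D')>0$; both are valid.
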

\begin{proof}
The equality $\pi^*D - \{E_D\} = \bar D + \lfloor E_D\rfloor$ implies that
$$D^2+\{E_D\}^2 = \pi^*D^2+\{E_D\}^2 = (\pi^*D - \{E_D\})^2$$ 
is an even integer. Thus this number must be at least $-2$. On the other hand, it equals the self-intersection of the divisor $D' := \bar D + \lfloor E_D\rfloor$, and thus either $D'$ or $-D'$ is linearly equivalent to an effective divisor. Since $\pi_*D' = D$ the statement follows.
\end{proof}

From the definition of $\{E_D\}$ it follows that this $\mathbb Q$-divisor is effective with rational coefficients smaller than one. Moreover, for any irreducible curve $C$ in the exceptional locus $\Exc(\pi)$ of $\pi$, we have 
$$\{E_D\}\cdot C \in \mathbb Z.$$

This implies that $\{E_D\}$ belongs to the dual lattice $\Hom(\Exc(\pi),\mathbb Z)$. When the singularity is of type
$A_n$ the discriminant group is known to be cyclic
of order $n+1$:
$$
\Hom(\Exc(\pi),\mathbb Z)/\Exc(\pi)\simeq \left\langle \frac{1}{n+1}\sum_{i=1}^niE_i\right\rangle.
$$
Thus $\{E_D\}$ is either the generator or the fractional part of a multiple of this.

\begin{Proposition}\label{pro:mod}
With the above notation, assume that the singular locus of $X$ consists of one point of type $A_n$, with $n\leq 7$, and let 
$E := \frac{1}{n+1}\sum_{i=1}^niE_i$.
Then
$$
\{kE\}^2 = k^2\{E\}^2\pmod 2-2.
$$
\end{Proposition}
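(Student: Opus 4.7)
My plan is to split the statement into a congruence and a bound. The congruence $\{kE\}^2\equiv k^2\{E\}^2\pmod{2\mathbb Z}$ holds for any $A_n$, and the bound $\{kE\}^2\in[-2,0]$ is where the hypothesis $n\leq 7$ enters. Together they identify $\{kE\}^2$ as the unique representative of $k^2\{E\}^2\pmod{2\mathbb Z}$ in $[-2,0]$, which is what the displayed formula asserts.

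For the congruence, I would write $kE=\lfloor kE\rfloor+\{kE\}$ with $\lfloor kE\rfloor\in\Exc(\pi)=\langle E_1,\dots,E_n\rangle$ and $\{kE\}\in\Hom(\Exc(\pi),\mathbb Z)$, as observed just before the statement, and expand
\[
k^2\{E\}^2=\lfloor kE\rfloor^2+2\lfloor kE\rfloor\cdot\{kE\}+\{kE\}^2.
\]
The first term is even since the $A_n$ Cartan lattice is even, and the middle term is an integer as a pairing between a lattice element and a dual-lattice element, so $\{kE\}^2\equiv k^2\{E\}^2\pmod{2\mathbb Z}$. Equivalently, this expresses the familiar fact that the discriminant form of $A_n$ is $\mathbb Q/2\mathbb Z$-valued.

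To pin down the value I would compare $\{kE\}$ with the dual-basis element $E_k^\vee\in\Hom(\Exc(\pi),\mathbb Z)$ defined by $E_k^\vee\cdot E_i=\delta_{ki}$. Using $E_i\cdot E_j=-(C_{A_n})_{ij}$ and the standard identity $(C_{A_n}^{-1})_{kk}=k(n+1-k)/(n+1)$ one gets
\[
(E_k^\vee)^2=-(C_{A_n}^{-1})_{kk}=-\frac{k(n+1-k)}{n+1},
\]
and writing $E_k^\vee=kE-\sum_j\min(j,k)E_j$ shows that $E_k^\vee$ represents the same class as $kE$ in the discriminant group $\Hom(\Exc(\pi),\mathbb Z)/\Exc(\pi)\simeq\mathbb Z/(n+1)$. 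The congruence then gives $\{kE\}^2\equiv(E_k^\vee)^2\pmod{2\mathbb Z}$. For $n\leq 7$, the elementary estimate $k(n+1-k)\leq\lfloor(n+1)^2/4\rfloor\leq 2(n+1)$, sharp at $(n,k)=(7,4)$, forces $(E_k^\vee)^2\in[-2,0]$; and since the intersection form on $\Hom(\Exc(\pi),\mathbb Z)$ is negative definite, $\{kE\}^2<0$ whenever $\{kE\}\neq 0$, i.e.\ whenever $k\not\equiv 0\pmod{n+1}$. Thus both values sit in the half-open interval $[-2,0)$, which has length $2$ and so contains a single representative of each class modulo $2\mathbb Z$; hence $\{kE\}^2=(E_k^\vee)^2$.

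The main obstacle I foresee is precisely the negative-definiteness input used to rule out $\{kE\}^2=0$ with $\{kE\}\neq 0$; without it one would only know $\{kE\}^2$ and $(E_k^\vee)^2$ modulo $2\mathbb Z$, leaving the ambiguity between $0$ and $-2$ unresolved. Once that is settled, the rest reduces to the elementary bound on $k(n+1-k)/(n+1)$, with the sharp case $(n,k)=(7,4)$ saturating the formula at $-2$.
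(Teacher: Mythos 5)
Your reading of the statement, the congruence step, and the identification of where $n\leq 7$ enters are all sensible, but the final step has a genuine gap. You prove that $(E_k^\vee)^2=-k(n+1-k)/(n+1)$ lies in $[-2,0)$ and that $\{kE\}^2\equiv(E_k^\vee)^2\pmod{2\mathbb Z}$, and then assert that ``both values sit in the half-open interval $[-2,0)$.'' But for $\{kE\}^2$ itself you only establish the upper bound $\{kE\}^2<0$ (negative definiteness); the lower bound $\{kE\}^2\geq -2$ is never proved. Since $\{kE\}$ and $E_k^\vee$ are merely congruent modulo the lattice $\langle E_1,\dots,E_n\rangle$ --- they are different vectors --- the bound on $(E_k^\vee)^2$ does not transfer, and the congruence alone leaves open $\{kE\}^2=(E_k^\vee)^2-2$, $(E_k^\vee)^2-4$, and so on. This is not a removable formality: the whole content of the proposition is a bound on $\{kE\}^2$, and the only quantitative input in your argument that uses $n\leq 7$ concerns the wrong vector. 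Nor can one appeal to the coefficients of $\{kE\}$ being in $[0,1)$: for $n=8$, $k=4$ one has $\{4E\}^2=-20/9<-2$ even though all coefficients of $\{4E\}$ lie in $[0,1)$.

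The gap is easy to close, and doing so makes the detour through $E_k^\vee$ unnecessary. Write $\{kE\}=\frac{1}{n+1}\sum_{j=1}^n r_jE_j$ with $r_j=kj\bmod(n+1)$ and set $r_0=r_{n+1}=0$; a telescoping computation with the $A_n$ Gram matrix gives
\[
\{kE\}^2=-\frac{1}{(n+1)^2}\sum_{j=0}^{n}(r_{j+1}-r_j)^2 .
\]
Each increment $r_{j+1}-r_j$ equals $k$ or $k-(n+1)$ (for $1\leq k\leq n$), and exactly $k$ of the $n+1$ increments are negative because they sum to zero; hence the sum equals $(n+1-k)k^2+k(n+1-k)^2=k(n+1-k)(n+1)$ and $\{kE\}^2=-k(n+1-k)/(n+1)$ exactly, for every $n$. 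Your estimate $k(n+1-k)\leq\lfloor(n+1)^2/4\rfloor\leq 2(n+1)$ for $n\leq 7$ then finishes the proof. For comparison, the paper disposes of the finitely many cases $n\leq 7$ by direct computation with the Cartan matrices; your approach, once repaired in this way, is more informative, since the closed formula also yields the exceptional values at $n=11,14,15$ tabulated in the proof of Theorem \ref{MainAn}.
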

\begin{proof}
This can be directly verified by calculating the Cartan matrix of $A_n$ for $n\leq 7$.
\end{proof}

To continue with our analysis we need some preliminary results on Cartan matrices of $A_n$ singularities. 

\subsection*{Cartan matrices of $A_n$ singularities}
Let $C^n$ be the Cartan matrix of the $A_n$ singularity. This is the $n\times n$ matrix whose diagonal entries are $2$, the entries on the superdiagonal and on the subdiagonal are $-1$, and the remaining entries are $0$:
$$
C^n = \left(
\begin{array}{ccccc}
2 & -1 & 0 & \dots & 0 \\ 
-1 & 2 & -1 & \dots & 0 \\ 
\vdots & \vdots & \vdots & \ddots & \vdots \\ 
0 & \dots & -1 & 2 & -1 \\ 
0 & \dots & 0 & -1 & 2
\end{array}
\right).
$$
We will denote by $C^{n}_{i,j}$ the submatrix of $C^n$ obtained by removing the row indexed by $i$ and the column indexed by $j$.

\begin{Lemma}\label{detCartan}
The determinant of $C^n$ is given by $\det(C^n) = n+1$.
\end{Lemma}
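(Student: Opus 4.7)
The plan is to set up a linear recurrence for $d_n := \det(C^n)$ by Laplace expansion along the first row and then solve it by induction. Since only the first two entries of the first row are nonzero (equal to $2$ and $-1$), the expansion reduces to two contributions:
\[
d_n = 2\det(C^n_{1,1}) - (-1)\cdot(-1)\det(C^n_{1,2}) \cdot(-1)^{?},
\]
with signs to be tracked carefully. The first minor is immediate: removing row and column $1$ leaves the Cartan matrix of $A_{n-1}$, so $\det(C^n_{1,1}) = d_{n-1}$.

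The real bookkeeping lies in the second minor. Removing row $1$ and column $2$ of $C^n$ produces an $(n-1)\times(n-1)$ matrix whose first column is $(-1,0,\dots,0)^T$ and whose remaining block is the Cartan matrix of $A_{n-2}$. Expanding along that first column gives $\det(C^n_{1,2}) = -d_{n-2}$. Combining with the appropriate cofactor sign $(-1)^{1+2}$ yields the recurrence
\[
d_n = 2\,d_{n-1} - d_{n-2}.
\]

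I would then check the base cases directly: $d_1 = 2$ and $d_2 = \det\begin{pmatrix} 2 & -1 \\ -1 & 2\end{pmatrix} = 3$, matching $n+1$ for $n=1,2$. A straightforward induction on $n$ then gives $d_n = 2(n) - (n-1) = n+1$, completing the proof.

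No step here is a serious obstacle: the only place to be careful is the sign accounting in the Laplace expansion of the $(1,2)$-minor, which is why I would write out $C^n_{1,2}$ explicitly before expanding along its first column. The result $\det(C^n) = n+1$ is of course classical and can alternatively be obtained from the formula $\det(C^n) = [\mathrm{GL}(\Lambda^*):\mathrm{GL}(\Lambda)]$ for the $A_n$ root lattice, or by solving the recurrence via its characteristic polynomial $x^2 - 2x + 1 = (x-1)^2$ to get $d_n = \alpha + \beta n$ and fitting the initial data, but the cofactor-induction approach is the cleanest for insertion here.
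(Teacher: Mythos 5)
Your proposal is correct and follows essentially the same route as the paper's proof: Laplace expansion along the first row, the identifications $C^n_{1,1}=C^{n-1}$ and $\det(C^n_{1,2})=-\det(C^{n-2})$, the resulting recurrence $\det(C^n)=2\det(C^{n-1})-\det(C^{n-2})$, and induction from the base cases $n=1,2$. The sign bookkeeping you flag is handled exactly as in the paper, so there is nothing to add.
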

\begin{proof}
We have $C^1 = (2)$ and 
$$
C^2 = 
\left(\begin{array}{cc}
2 & -1 \\ 
-1 & 2
\end{array}\right).
$$
Hence, the claim is clear for $n = 1,2$. We proceed by induction on $n$. Expand the determinant with respect to the first row of $C^n$:
$$
\det(C^n) = 2\det(C^{n}_{1,1})+\det(C^{n}_{1,2}).
$$
Note that $C^{n}_{1,1} = C^{n-1}$ and 
$$
C^n_{1,2} = 
\left(\begin{array}{ccccccccc}
-1 & -1 & 0 & \dots & \dots & \dots & \dots & \dots & 0 \\ 
0 & 2 & -1 & 0 & \dots & \dots & \dots & \dots & 0\\
0 & -1 & 2 & -1 & 0 & \dots & \dots & \dots & 0\\
\vdots & \vdots & \vdots & \vdots & \vdots & \ddots & \vdots & \vdots & \vdots\\
0 & 0 & 0 & 0 & 0 & \dots & 0 & -1 & 2
\end{array}\right).
$$
Therefore, $\det(C^n_{1,2}) = -\det(C^{n-2}) = -n+1$ and
$$
\det(C^n) = 2\det(C^{n-1})-\det(C^{n-2}) = 2((n-1)+1) + (-n+1) = n+1 
$$
concluding the proof.
\end{proof}

\begin{Lemma}\label{DiagC}
Let $(C^{n})^{-1}$ be the inverse of $C^n$, and $d_{i,i}$ the $i$-th diagonal entry of $(C^{n})^{-1}$. Then 
$$
d_{i,i} = \frac{i(n-i+1)}{n+1}
$$
for $i = 1,\dots, n$. In particular, $$d_{i,i} = d_{n-i+1,n-i+1}$$ 
for $i = 1,\dots, \lfloor\frac{n+1}{2}\rfloor$.
\end{Lemma}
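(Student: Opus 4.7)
The plan is to prove the formula by Cramer's rule, exploiting the fact that deleting the $i$-th row and column of the tridiagonal matrix $C^n$ decouples it into a block-diagonal matrix whose blocks are themselves Cartan matrices of smaller type $A$. Concretely, I would write
$$
d_{i,i} \;=\; \frac{\det(C^n_{i,i})}{\det(C^n)},
$$
so that by Lemma~\ref{detCartan} the denominator is already known to equal $n+1$, and only the numerator needs work.

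The key observation is that the submatrix $C^n_{i,i}$ is block diagonal: because the only nonzero off-diagonal entries of $C^n$ are in positions $(k,k\pm 1)$, removing the $i$-th row and column severs the matrix into the top-left block consisting of the rows and columns indexed by $1,\dots,i-1$, and the bottom-right block indexed by $i+1,\dots,n$. Under the obvious relabeling these blocks are exactly $C^{i-1}$ and $C^{n-i}$ (with the convention $\det(C^0)=1$ for the empty matrix, taking care of the boundary cases $i=1$ and $i=n$). Hence
$$
\det(C^n_{i,i}) \;=\; \det(C^{i-1})\,\det(C^{n-i}) \;=\; i\,(n-i+1),
$$
where the second equality again uses Lemma~\ref{detCartan}. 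Dividing by $n+1$ yields the stated formula.

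For the symmetry statement, I would simply substitute $i \mapsto n-i+1$ in the expression $\frac{i(n-i+1)}{n+1}$ and observe that it is invariant; no additional argument is needed. I do not expect any real obstacle: the whole argument is a direct cofactor computation, and the only point worth emphasizing carefully is the block-decomposition of $C^n_{i,i}$, together with the convention that $C^0$ has determinant $1$ so that the formula handles the extremes $i=1$ and $i=n$ uniformly.
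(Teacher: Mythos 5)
Your proof is correct and takes essentially the same approach as the paper: the diagonal entry is computed as the cofactor $\det(C^n_{i,i})/\det(C^n)$, the minor $C^n_{i,i}$ is identified as the block-diagonal matrix $C^{i-1}\oplus C^{n-i}$, and Lemma~\ref{detCartan} supplies both determinants. Your explicit handling of the boundary cases $i=1,n$ via the convention $\det(C^0)=1$ is a small point the paper leaves implicit.
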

\begin{proof}
For $n = 1,2$ the claim is clear. We proceed by induction on $n$. Note that 
$$
C^{n}_{i,i} = \left(\begin{array}{cc}
C^{i-1} & 0 \\ 
0 & C^{n-i}
\end{array}\right), 
$$
where the zeros stand for zero matrices of suitable size, for $i = 1,\dots, n$. Therefore, 
$$
d_{i,i} = \frac{(-1)^{2i}\det(C^n_{i,i})}{\det(C^n)} = \frac{\det(C^n_{i,i})}{\det(C^n)}.
$$
By induction we have 
$$
\det(C^n_{i,i}) = \det(C^{i-1})\det(C^{n-i}) = ((i-1)+1)(n-i+1) = i(n-i+1).
$$
Finally, we conclude by Lemma \ref{detCartan}.
\end{proof}

\begin{Proposition}\label{pro:ine}
Let $(C^{n})^{-1}$ be the inverse of $C^n$, and $d_{i,i}$ the $i$-th diagonal entry of $(C^{n})^{-1}$. If $n\geq 8$ then $d_{i,i}\geq 2$ for $i = 3,\dots,n-2$.
\end{Proposition}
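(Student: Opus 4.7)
The plan is to apply Lemma \ref{DiagC} directly and reduce the claim to a one-variable inequality. Since $d_{i,i} = \frac{i(n-i+1)}{n+1}$, the statement $d_{i,i}\geq 2$ for $3\leq i\leq n-2$ is equivalent to the inequality
\[
i(n-i+1)\geq 2(n+1)\qquad \text{for all } i\in\{3,\dots,n-2\}.
\]
So the strategy is: identify where the left-hand side is smallest on the integer interval $[3,n-2]$, evaluate it there, and check when the resulting bound holds.

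First I would observe that $f(i):=i(n-i+1)=-i^{2}+(n+1)i$ is a downward-opening parabola in the real variable $i$, with maximum at $i=(n+1)/2$. Hence on the interval $[3,n-2]$ the function $f$ attains its minimum at one of the endpoints. By the symmetry $d_{i,i}=d_{n-i+1,n-i+1}$ already recorded in Lemma \ref{DiagC} (or just by direct computation) one has $f(3)=f(n-2)=3(n-2)$, so the minimum of $f$ on $\{3,\dots,n-2\}$ equals $3(n-2)$.

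The remaining step is to verify that $3(n-2)\geq 2(n+1)$ for $n\geq 8$. This simplifies to $n\geq 8$, which is exactly the hypothesis. Therefore $d_{i,i}\geq 2$ throughout the claimed range, and the proposition follows.

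There is essentially no obstacle here: the whole proof is a one-line consequence of the closed form provided by Lemma \ref{DiagC} together with the concavity of $i\mapsto i(n-i+1)$. The only mild subtlety is being careful about the endpoints of the range: one must check that $i=3$ and $i=n-2$ are indeed both valid (which requires $n-2\geq 3$, i.e. $n\geq 5$, automatic under $n\geq 8$) and that the symmetry argument really pins the minimum at these endpoints rather than at an interior integer point, which it does because $f$ is concave.
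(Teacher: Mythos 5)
Your proof is correct and follows essentially the same route as the paper: both reduce the claim via the closed form $d_{i,i}=\frac{i(n-i+1)}{n+1}$ from Lemma \ref{DiagC}, use concavity of the parabola together with the symmetry $d_{i,i}=d_{n-i+1,n-i+1}$ to locate the minimum at $i=3$ (equivalently $i=n-2$), and then check that $\frac{3(n-2)}{n+1}\geq 2$ is exactly the condition $n\geq 8$.
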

\begin{proof}
Since $n\geq 8$ Lemma \ref{DiagC} yields
$$
d_{3,3} = \frac{3(n-2)}{n+1}\geq 2.
$$
The graph of the function $f(x) = \frac{x(n-x+1)}{n+1}$ is an inverted parabola with vertex in $x = \frac{n+1}{2}$. The vertex is therefore the global maximum of $f(x)$. Hence, 
$$
d_{i,i} \geq d_{3,3}\geq 2
$$
for $i = 3,\dots, \lfloor\frac{n+1}{2}\rfloor$. Finally, by Lemma \ref{DiagC} $d_{i,i} = d_{n-i+1,n-i+1}$ for $i = 1,\dots, \lfloor\frac{n+1}{2}\rfloor$ and hence $d_{i,i}\geq 2$ for $i = 3,\dots,n-2$.   
\end{proof}

We are now ready to proceed with the study of the $A_n$ case. 

\begin{Proposition}\label{pro:An}
Assume that the singular locus of $X$ consists of a single point of type $A_n$, with $n\geq 8$. Let $\pi\colon \widetilde X\to X$ be a minimal resolution of singularities with exceptional locus supported on $E_1,\dots,E_n$, and let $D$ be an irreducible curve in $X$ with $D^2<0$. 

Then $\bar D\cdot E_i\neq 0$ for at most one value of $i\in \{1,2,n-1,n\}$, and in this case we have
$$
 D^2
 =
 \begin{cases}
 -\frac{n+2}{n+1} & \text{ if } i\in\{1,n\};\\
 -\frac{4}{n+1}& \text{ if} i\in\{2,n-1\}.
 \end{cases}
$$
\end{Proposition}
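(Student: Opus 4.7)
My plan is to reduce the statement to an elementary computation with the quadratic form associated to \((C^{n})^{-1}\). Writing the Mumford pullback as \(\pi^{*}D=\bar D+E_{D}\) with \(E_{D}=\sum_{i}c_{i}E_{i}\) uniquely determined by \(\pi^{*}D\cdot E_{i}=0\), and setting \(w_{i}:=\bar D\cdot E_{i}\in\mathbb Z_{\ge 0}\), the fact that the intersection matrix of the \(E_{i}\) is \(-C^{n}\) rewrites these equations as \(C^{n}(c_{1},\dots,c_{n})^{T}=w\). A short computation using \(\pi^{*}D\cdot E_{D}=0\) then gives
\[
 D^{2}=(\pi^{*}D)^{2}=\bar D^{2}+w^{T}(C^{n})^{-1}w.
\]
Since \((C^{n})^{-1}\) has non-negative entries and \(w\ge 0\), the second summand is non-negative. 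Combined with the hypothesis \(D^{2}<0\) and the smooth K3 bound \(\bar D^{2}\ge -2\), this already forces \(\bar D^{2}=-2\) (so \(\bar D\) is a smooth rational curve) and \(w^{T}(C^{n})^{-1}w<2\).

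The next step is to show that only the boundary coordinates \(w_{1},w_{2},w_{n-1},w_{n}\) can be non-zero, and that at most one of them actually is. The first claim is immediate from Proposition~\ref{pro:ine}: for \(j\in\{3,\dots,n-2\}\) we have \(d_{j,j}\ge 2\), so \(w_{j}\ge 1\) would give \(w^{T}(C^{n})^{-1}w\ge d_{j,j}\ge 2\), contradicting the bound above. For the second claim, I will compute the six off-diagonal entries \(d_{i,j}\) with \(i,j\in\{1,2,n-1,n\}\) directly from the adjugate of \(C^{n}\) and verify that \(d_{i,i}+2d_{i,j}+d_{j,j}\ge 2\) for each pair, with equality only when \(\{i,j\}=\{1,n\}\) (where the sum is exactly \(2\)). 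Since this equality case would produce \(D^{2}=0\), the strict inequality \(D^{2}<0\) rules out every configuration with two of \(w_{1},w_{2},w_{n-1},w_{n}\) simultaneously positive.

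Once exactly one \(w_{i}\) is non-zero, the inequality \(d_{i,i}w_{i}^{2}<2\) together with the diagonal values \(d_{1,1}=d_{n,n}=\frac{n}{n+1}\) and \(d_{2,2}=d_{n-1,n-1}=\frac{2(n-1)}{n+1}\) from Lemma~\ref{DiagC} forces \(w_{i}=1\) whenever \(n\ge 8\); substituting into \(D^{2}=-2+d_{i,i}\) produces the two stated values \(-\frac{n+2}{n+1}\) and \(-\frac{4}{n+1}\). I expect the pairwise check in the preceding paragraph to be the main obstacle: the six pair-sums must all be verified by hand, and the case \(\{1,n\}\) meets the bound only with equality, so both the strictness of \(D^{2}<0\) and the hypothesis \(n\ge 8\) are essential — for smaller \(n\) the off-diagonal contribution at \(\{1,n\}\) would stop being the only tight pair.
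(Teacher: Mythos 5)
Your proof follows essentially the same route as the paper: decompose $\pi^*D=\bar D+E_D$, deduce $\bar D^2=-2$ and $D^2=-2+w^{T}(C^n)^{-1}w$ with $w_i=\bar D\cdot E_i\ge 0$, and use Proposition~\ref{pro:ine} together with the diagonal entries from Lemma~\ref{DiagC} to pin down the admissible $w$. You are in fact more thorough than the paper, whose proof only treats the case of a single unit entry of $w$ and does not explicitly exclude two simultaneous non-zero boundary entries or $w_i\ge 2$; your pairwise check (all six sums $d_{i,i}+2d_{i,j}+d_{j,j}$ for $i,j\in\{1,2,n-1,n\}$ are $\ge 2$, with equality exactly for $\{1,n\}$, which would force $D^2\ge 0$) is correct and closes that gap. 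Only your closing aside is slightly off: those pairwise bounds hold for all $n\ge 2$, and the hypothesis $n\ge 8$ is really only needed to invoke Proposition~\ref{pro:ine} on the interior positions $3,\dots,n-2$.
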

\begin{proof}
Recall that 
$$D^2 = \pi^*D^2 = \pi^*D\cdot (\bar D + E_D) = \pi^*D\cdot \bar D = (\bar D + E_D)\cdot \bar D$$
since $\pi^*D$ is orthogonal to $E_D$. Moreover, since $D$ is irreducible, $\bar D$ must
be a $(-2)$-curve. On the other hand 
$$0 = (\bar D+E_D)\cdot E_D$$
yields $E_D\cdot \bar D = -E_D^2$, so that
$$
D^2 = -2 - E_D^2.
$$
Let $E_D = \sum_{k=1}^n\alpha_kE_k$. Then 
$$\beta_i := D\cdot E_i = - E_D\cdot E_i = -(\sum_{k=1}^n\alpha_kE_k)\cdot E_i.$$ 
Set $\alpha := (\alpha_1,\dots,\alpha_n)$, $\beta := (\beta_1,\dots,\beta_n)$, and let $M := (E_i\cdot E_j)$ be the intersection matrix of $E_1,\dots,E_n$. Then $\beta = -\alpha\cdot M$ or equivalently $\alpha = -\beta\cdot M^{-1}$. Thus
$$
D^2 = -2-E_D^2 = -2-\alpha\cdot M\cdot \alpha^t = -2-\beta\cdot M^{-1}\cdot\beta^t.
$$
By definition the vector $\beta$
has non-negative integral entries. If it contains only one non-zero entry, in position $i$, then 
$$-\beta\cdot M^{-1}\cdot\beta^t = -M_{i,i}^{-1},$$
the opposite of the $i$-th diagonal entry of the negative
definite matrix $M^{-1}$. When $n\geq 8$ this entry is at least $2$ as soon as $3\leq i\leq n-2$ by
Proposition~\ref{pro:ine}. If $\beta = (1,0,\dots,0)$ or $\beta = (0,\dots,0,1)$ then, by Lemma~\ref{DiagC}, we have
$$
D^2 = -2-M_{1,1}^{-1} = -2 + \frac{n}{n+1} = -\frac{n+2}{n+1}.
$$
If either $\beta = (0,1,0\dots,0)$ or $\beta = (0,\dots,0,1,0)$ then, by Lemma~\ref{DiagC}, we have
$$
D^2 = -2-M_{2,2}^{-1}  = -2 + \frac{2n-2}{n+1} = -\frac{4}{n+1}
$$
proving the statement.
\end{proof}

We now prove the main result of this section.

\begin{thm}\label{MainAn}
Let $X$ be a $K3$ surface of Picard rank two and whose singular locus consists of a point of type $A_n$. Assume that $X$ contains an irreducible curve of negative self-intersection. If $n\notin\{11,14,15\}$ then $X$ is Mori dream. 
\end{thm}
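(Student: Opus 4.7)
The plan is to apply Proposition~\ref{pro:mds}, which for a $K3$ surface of Picard rank two reduces Mori dreamness to the existence of two effective divisors $D_1,D_2$ with $D_i^2\leq 0$ and $D_1\cdot D_2>0$. The given irreducible curve of negative self-intersection serves as $D_1$, so the task is to construct an effective $D_2$ forming the required positive-intersection pair.

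If $d_X$ is a square, then $\Pic(X)$ carries a class of self-intersection zero which, combined with $D_1$, gives the pair; so assume $d_X$ is not a square. Lemma~\ref{FindingAnotherCurve} then produces infinitely many candidates $D_2\in\Cl(X)$ satisfying $D_2^2=D_1^2$, $D_1\cdot D_2>0$, and $D_2\not\equiv -D_1$, obtained from the action of the infinite orthogonal group $O(Q)$ on an initial solution of the Pell-type equation associated with the binary form on $\Cl(X)$. Among these, $D_2\cdot H$ is unbounded for any fixed ample $H$, so one may further impose $D_2\cdot H>0$, which ensures $D_2$ rather than $-D_2$ is the effective representative whenever Proposition~\ref{pro:eff} applies.

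Effectiveness is controlled by Proposition~\ref{pro:eff}: the inequality $D_2^2+\{E_{D_2}\}^2>-4$ makes $D_2$ linearly equivalent to an effective divisor. The quantity $\{E_{D_2}\}^2$ depends only on the image of $D_2$ in the discriminant group $\Cl(X)/\Pic(X)\simeq\mathbb Z/(n+1)$ and equals $\{kE\}^2$ for $E=\tfrac{1}{n+1}\sum_{i=1}^n iE_i$; the closed-form expression
\[
\{kE\}^2=-c_1^2-c_n^2-\sum_{i=1}^{n-1}(c_i-c_{i+1})^2,\qquad c_i=\{ki/(n+1)\},
\]
makes it fully explicit. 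For $n\leq 7$ Proposition~\ref{pro:mod} reduces the verification to a finite direct check. For $n\geq 8$ Proposition~\ref{pro:An} restricts $D_1^2$ to $\{-2,-(n+2)/(n+1),-4/(n+1)\}$; when $D_1^2=-2$ the divisor $D_1$ is Cartier at the singular point, and applying Lemma~\ref{FindingAnotherCurve} within the rank-two sublattice $\Pic(X)\subseteq\Cl(X)$ furnishes $D_2\in\Pic(X)$, so $\{E_{D_2}\}=0$ and the bound is trivial.

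The main obstacle is the case $D_1^2\in\{-(n+2)/(n+1),-4/(n+1)\}$: here $[D_2]$ is confined to a proper subset of $\mathbb Z/(n+1)$ (the $O(Q)$-orbit of $[D_1]$), and one must check that this subset meets the set of \emph{good} residues satisfying $\{kE\}^2>-4-D_1^2$. Evaluating $\{kE\}^2$ with the closed form and matching against the attainable residues identifies the bad residues for each $n$ and shows that a good representative always exists for $n\notin\{11,14,15\}$. The three exceptional values arise precisely because the orbit falls entirely in the bad locus; for instance at $n=15$ the residue $k=8$ satisfies $\{8E\}^2=-4$, so $D_2^2+\{8E\}^2\leq-4$ no matter the choice. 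For every other $n\geq 8$, the resulting effective $D_2$ together with $D_1$ satisfies Proposition~\ref{pro:mds}, yielding Mori dreamness.
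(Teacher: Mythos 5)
Your architecture coincides with the paper's (Proposition~\ref{pro:mds} to reduce to finding the pair $D_1,D_2$, Lemma~\ref{FindingAnotherCurve} to produce the class of $D_2$, Proposition~\ref{pro:eff} for effectiveness, Propositions~\ref{pro:mod} and~\ref{pro:An} for the case split at $n=8$), and some of your side remarks are sound and even more careful than the paper: choosing $D_2$ near the isotropic ray on which both $D_2\cdot H>0$ and $D_2\cdot D_1>0$ correctly pins down which of $\pm D_2$ is effective, and the case $D_1^2=-2$ for $n\geq 8$ (where $D_1$ is Cartier and one can run Lemma~\ref{FindingAnotherCurve} inside $\Pic(X)$) is a case the paper's proof silently skips. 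The genuine gap is at the decisive step, $n\geq 8$ with $D_1^2\in\{-\tfrac{n+2}{n+1},-\tfrac{4}{n+1}\}$: you propose to control $\{E_{D_2}\}^2$ by intersecting ``the $O(Q)$-orbit of $[D_1]$ in $\Z/(n+1)$'' with the good residues. That orbit depends on the particular lattice $\Cl(X)$ and form $Q$, not only on $n$, so an $n$-by-$n$ verification of the kind you announce is not well posed, and in any case you never carry it out. The mechanism that actually closes the argument is lattice-independent and purely arithmetic: $D_2^2+\{E_{D_2}\}^2=(\pi^*D_2-\{E_{D_2}\})^2$ is an \emph{even integer}, and the bounds on $D_2^2$ and on $\{kE\}^2=-\tfrac{k(n+1-k)}{n+1}$ force it to be $-2$ or $-4$; one then checks, for each $8\leq n\leq 18$ and each admissible value of $D_2^2$, that among the residues $k$ with $\{kE\}^2\equiv -D_2^2\pmod{2\Z}$ the value of $\{kE\}^2$ is uniquely determined and gives $-2$, the only failures being the pairs listed for $n=11,14,15$. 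Your write-up never invokes this parity constraint, so nothing in it pins $\{E_{D_2}\}^2$ down, and effectiveness of $D_2$ is not established.

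Your diagnosis of the exceptional values also points at the wrong phenomenon. For $n=15$ the residue $k=8$ does satisfy $\{8E\}^2=-4$, but it is not integrality-compatible with either admissible self-intersection ($-\tfrac{17}{16}-4$ and $-\tfrac14-4$ are not even integers), so it can never occur as the class of such a $D_2$; the true obstruction at $n=15$ is that for $D_2^2=-\tfrac14$ both $\{2E\}^2=-\tfrac74$ and $\{6E\}^2=-\tfrac{15}{4}$ are compatible with the parity constraint, and the second yields $D_2^2+\{E_{D_2}\}^2=-4$, where Proposition~\ref{pro:eff} fails. Likewise the exceptional $n$ are excluded not because ``the orbit falls entirely in the bad locus'' but because the bad value cannot be ruled out; the theorem is simply not claimed there.
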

\begin{proof}
Let $D_1\subset X$ be an irreducible curve of negative self-intersection. By Lemma \ref{FindingAnotherCurve} there is a divisor $D_2$ on $X$ with $D_2^2 < 0$.

As a consequence of Proposition~\ref{pro:mod}, if $n\leq 7$ then $-2\leq \{E_D\}^2< 0$, so that the hypothesis of Proposition~\ref{pro:eff} is fulfilled whenever $D^2\geq -2$.

If $n\geq 8$ and $D$ has the same self-intersection of an irreducible 
negative curve then, by Proposition \ref{pro:An}
$$
 D^2\in\left\{-\frac{n+2}{n+1},-\frac{4}{n+1}\right\}.
$$
On the other hand, $D^2+\{E_D\}^2\equiv 0 \pmod 2$. In general this equation uniquely determines $\{E_D\}^2$, but this is not always the case. When $1\leq n\leq 18$, there are the following exceptions:
\begin{longtable}{c|l|l}
$11$ & $\{E\}^2 = -\frac{11}{12}$ & $\{5E\}^2 = -\frac{35}{12}$\\ 
\midrule
$14$ & $\{E\}^2 = -\frac{14}{15}$ & $\{4E\}^2 = -\frac{44}{15}$\\ 
\midrule
$14$ & $\{2E\}^2 = -\frac{26}{15}$ & $\{7E\}^2 = -\frac{56}{15}$\\ 
\midrule
$15$ & $\{2E\}^2 = -\frac{7}{4}$ & $\{6E\}^2 = -\frac{15}{4}$
\end{longtable}
where $E := \frac{1}{n+1}\sum_{i=1}^niE_i$ is the generator of the discriminant group. In each of these cases 
$D^2+\{E_D\}^2$ equals $-2$ for the first choice of $\{E_D\}$ and $-4$ for the second choice of $\{E_D\}$.
\end{proof}

\section{Codimension one $K3$ surfaces with one $A_n$ singularity}\label{c1s1}

Let $\mathbb{P} := \mathbb{P}(a,b,c,d)$ be a weighted projective
space, and $X\subseteq\mathbb {P}$ an anticanonical surface of degree $a+b+c+d$, whose (possible) singularities are of type $A_n$. Then $X$ is a $K3$ surface which is known to belong to a finite list of $95$ families
\cite{re}.

If $X$ is a very general element in its linear system on $\mathbb P$, then the pullback $\Cl(\mathbb P)
\to \Cl(X)$ is surjective \cite{RS09}, so that the latter group is free of rank one. If $X$ is no longer very 
general then $\Cl(X)$ can have higher rank. 

We are interested in those families where the Picard rank is two and moreover $X$ is a Mori dream surface. The second condition, given the first, is equivalent to the existence of two irreducible curves $\Gamma$, $\Gamma'$ 
on $X$ of negative self-intersection and such that $\Gamma\cdot\Gamma'>0$. 

Our strategy is the following: if one of these curves exists, then one of its irreducible components must have negative self-intersection, we will keep denoting it with $\Gamma$. 

If $\pi:\widetilde X\to X$ is a minimal resolution of singularities, then $\widetilde X$ is a $K3$ surface and the strict transform $\widetilde \Gamma$ is either a $(-2)$-curve or $\widetilde\Gamma^2 = 0$. 

In the second case $\widetilde \Gamma$ does not intersect the exceptional locus of $\pi$, while in the first case it could intersect the exceptional locus but this intersection is bounded by the requirement that $\Gamma^2\leq 0$.

There are two main problems with this approach:
\begin{itemize}
\item[(i)] guarantee the existence of such an $\widetilde X$;
\item[(ii)] prove the existence of the curve $\Gamma'$.
\end{itemize}
For (i) we apply the global Torelli theorem which guarantees that, if $\Lambda$ is an hyperbolic even lattice which embeds into the $K3$ lattice
$$
\Lambda_{K3} := E_8^2\oplus U^3
$$
then there exists a family of $K3$ surfaces whose Picard lattice contains $\Lambda$ and the Picard lattice of the very general element of the family is exactly $\Lambda$. If $\rk(\Lambda)\leq 10$ it is known that $\Lambda$ always embeds into $\Lambda_{K3}$.

About (ii), the existence of $\Gamma'$ is guaranteed by Lemma~\ref{FindingAnotherCurve} and Proposition~\ref{pro:eff} whenever $X$ has only one singularity of type $A_n$ with $n\leq 10$. According to the classification of the $95$
families there are exactly $5$ families where $X$ has only one singular point. 

In each case we provide the value of $H^2$ and the possible values for $\Gamma^2$. We list the possible cases in the following table, where the notation $\{a,b\}$ means that we are free to choose one of the two parameters $a,b$:

\renewcommand{\arraystretch}{1.5}
\begin{longtable}{l|c|c|l|c}\label{thetable}
$X$ & $A_n$ & $\mathbb{P}(d_1,\dots,d_4)$ & Intersection matrix of $H,\Gamma$ & 
${\rm Pic}(\widetilde X)$\\
\hline
$X_{21}$ & $A_9$ & $\mathbb{P}(1,3,7,10)$ & 
{\footnotesize
$\begin{pmatrix}
\frac{1}{10}&n\\n&\{-2,0\}
\end{pmatrix}$ 
$\begin{pmatrix}
\frac{1}{10}&n+\frac{\{3,7\}}{10}\\n+\frac{\{3,7\}}{10}&-\frac{11}{10}
\end{pmatrix}$ 
%$\begin{pmatrix}
%\frac{1}{10}&n+\frac{7}{10}\\n+\frac{7}{10}&-\frac{11}{10}
%\end{pmatrix}$ 
$\begin{pmatrix}
\frac{1}{10}&n+\frac{\{3,7\}}{5}\\n+\frac{\{3,7\}}{5}&-\frac{2}{5}
\end{pmatrix}$ 
%$\begin{pmatrix}
%\frac{1}{10}&n+\frac{7}{5}\\n+\frac{7}{5}&-%\frac{2}{5}
%\end{pmatrix}$ 
%$\begin{pmatrix}
%\frac{1}{10}&n\\n&0
%\end{pmatrix}$ 
}
&
$E_8\oplus U$
\\
$X_{9}$ & $A_3$ & $\mathbb{P}(1,1,3,4)$ & 
{\footnotesize
$\begin{pmatrix}
\frac{3}{4}&n\\n&\{-2,0\}
\end{pmatrix}$ 
$\begin{pmatrix}
\frac{3}{4}&n+\frac{\{3,5\}}{4}\\n+\frac{\{3,5\}}{4}&-\frac{5}{4}
\end{pmatrix}$ 
%$\begin{pmatrix}
%\frac{3}{4}&n+\frac{3}{4}\\n+\frac{3}{4}&-\frac{5}{4}
%\end{pmatrix}$
$\begin{pmatrix}
\frac{3}{4}&n+\frac{3}{2}\\n+\frac{3}{2}&-1
\end{pmatrix}$ 
%$\begin{pmatrix}
%\frac{3}{4}&n\\n&0
%\end{pmatrix}$ 
}
&
$A_2\oplus U$\\
$X_{10}$ & $A_2$ & $\mathbb{P}(1,1,3,5)$ & 
{\footnotesize
$\begin{pmatrix}
\frac{2}{3}&n\\n&\{-2,0\}
\end{pmatrix}$ 
$\begin{pmatrix}
\frac{2}{3}&n+\frac{\{2,4\}}{3}\\
n+\frac{\{2,4\}}{3}&-\frac{4}{3}
\end{pmatrix}$ 
}
&
$A_1\oplus U$\\
$X_{12}$ & $A_1$ & $\mathbb{P}(1,1,4,6)$ & {\footnotesize
$\begin{pmatrix}
\frac{1}{2}&n\\n&\{-2,0\}
\end{pmatrix}$ 
$\begin{pmatrix}
\frac{1}{2}&n+\frac{1}{2}\\
n+\frac{1}{2}&-\frac{3}{2}
\end{pmatrix}$ 
}
&
$U$\\
$X_{5}$ & $A_1$ & $\mathbb{P}(1,1,1,2)$ & {\footnotesize
$\begin{pmatrix}
\frac{5}{2}&n\\n&\{-2,0\}
\end{pmatrix}$ 
$\begin{pmatrix}
\frac{5}{2}&n+\frac{1}{2}\\
n+\frac{1}{2}&-\frac{3}{2}
\end{pmatrix}$ 
}
 &
$\begin{pmatrix}
2&1\\1&-2
\end{pmatrix}$ 
\\
$X_{6}$ &  & $\mathbb{P}(1,1,1,3)$ & $\begin{pmatrix}
2&n\\n&\{-2,0\}
\end{pmatrix}$ 
 &
$(2)$\\
$X_{4}$ &  & $\mathbb{P}(1,1,1,1)$ & 
$\begin{pmatrix}
4&n\\n&\{-2,0\}
\end{pmatrix}$ 
&
$(4)$\\
\end{longtable}

We discuss the first case. The analysis of the remaining ones being similar. The minimal resolution $\pi:
\widetilde X\to X$ is a $K3$ surface whose Picard group, according to~\cite{bel}, is the lattice $E_8\oplus U$, we denote by $e_1,\dots,e_8,f_1,f_2$ the generators. Then we can assume one of the $f_i$ to be nef, let us say $f_1$.

Thus $f_1$ defines an elliptic fibration which contains the whole $E_8$ diagram of curves in a fiber and to complete the fiber one has to add the curve whose class is 
$$e_9 := f_1-(3e_1-2e_2-4e_3-6e_4-5e_5-4e_6-3e_7-2e_8)$$ 
according to the following graph:
\begin{center}
\begin{tikzpicture}[scale=1.5]
  \node (e8) at (0,0) {\(e_8\)};
  \node (e7) at (1,0) {\(e_7\)};
  \node (e6) at (2,0) {\(e_6\)};
  \node (e5) at (3,0) {\(e_5\)};
  \node (e4) at (4,0) {\(e_4\)};
  \node (e3) at (5,0) {\(e_3\)};
  \node (e2) at (6,0) {\(e_2\)};
  \node (e1) at (4,-1) {\(e_1\)};
  \draw (e8) -- (e7) -- (e6) -- (e5) -- (e4) -- (e3) -- (e2);
  \draw (e4) -- (e1);
\end{tikzpicture}
\end{center}
A section of this elliptic fibration has class $f_2-f_1$ and it intersects only $e_9$ in a point. The $A_9$ curve is thus given by $e_2,\dots,e_9,f_2-f_1$. After contracting these curves we are left with the class $h$, given by the image of $e_1$. 

Moving $\widetilde X$ in this family we obtain surfaces of higher Picard rank, in particular we are interested in those of Picard rank $11$ which contain a new curve $\widetilde\Gamma$ of self-intersection $-2$ or $0$. Set
$$
n := \widetilde\Gamma\cdot e_1.
$$ 
If $\widetilde\Gamma$ is disjoint from the $A_9$, after contracting it, we get the first type of intersection matrix. The second type is obtained assuming that $\widetilde\Gamma$ is a $(-2)$-curve which intersects only one of the extremal curves of the $A_9$, namely $e_2$ or $f_2-f_1$. Finally, the third type is obtained assuming that $\widetilde\Gamma$ is a $(-2)$-curve which intersects either $e_3$ or $e_8$.

In what follows we will provide an explicit example in each case and describe its geometry. Let $\Gamma \subseteq \mathbb{P}$ be an irreducible curve, and $\mathcal{L}$ the linear system of, possibly singular, $K3$ surfaces in $\mathbb{P}$ of degree $a+b+c+d$ that contain $\Gamma$.  

Let $\widetilde{X} \to X$ denote the resolution of $X \in \mathcal{L}$ with exceptional locus $E$, and $\Lambda_E$ be the sublattice of the Picard group generated by the classes of the irreducible components of $E$. 

\begin{Proposition}\label{PAut}
Let $X$ be a very general element of $\mathcal{L}$, and $\Aut_\Gamma(\mathbb{P}) \subseteq \Aut(\mathbb{P})$ the subgroup of automorphisms of $\mathbb{P}$ that preserve the curve $\Gamma$. If the following equality 
$$
\dim (\mathcal{L}) - \dim \Aut_\Gamma(\mathbb{P}) = 18 - \rk(\Lambda_E)
$$
holds, then $\rk(\Pic(X)) = 2$.
\end{Proposition}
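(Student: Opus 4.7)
The plan is to combine the global Torelli theorem with a dimension count in the moduli space of $K3$ surfaces. First I would locate a natural rank-$(2+\rk\Lambda_E)$ sublattice $\Lambda_0\subseteq\Pic(\widetilde X)$ for every $X\in\mathcal L$, generated by the pullback $\pi^*H$ of the hyperplane class, the strict transform $\widetilde\Gamma$ of $\Gamma\subset X$, and the irreducible components of the exceptional divisor $E$. By Lemma~\ref{res} the class $\pi^*H$ is orthogonal to $\Lambda_E$, and for $X$ very general in $\mathcal L$ the class of $\widetilde\Gamma$ is $\mathbb{Q}$-linearly independent from $\pi^*H$ and $\Lambda_E$ (otherwise $\Gamma$ would be a component of a hyperplane section of $X$, a proper closed subcondition on $\mathcal L$). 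Hence $\rk\Pic(\widetilde X)\geq 2+\rk\Lambda_E$, and by Lemma~\ref{res} $\rk\Pic(X)\geq 2$.

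Next, the surjectivity of the period map together with the global Torelli theorem gives that the locus of smooth $K3$ surfaces admitting a primitive embedding $\Lambda_0\hookrightarrow\Pic$ forms a $(20-\rk\Lambda_0)=(18-\rk\Lambda_E)$-dimensional sublocus of $\mathcal M_{K3}$. The assignment $X\mapsto\widetilde X$ sends $\mathcal L$ into this sublocus and is invariant under $\Aut_\Gamma(\mathbb P)$, so it factors through a morphism
$$
\Phi\colon \mathcal L/\Aut_\Gamma(\mathbb P)\longrightarrow \mathcal M_{K3}.
$$

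I would then argue that $\Phi$ is generically finite onto its image. For two very general $X_1,X_2\in\mathcal L$ in the same fibre of $\Phi$, any isomorphism $\phi\colon\widetilde X_1\to \widetilde X_2$ permutes the exceptional $(-2)$-curves and matches the classes of $\widetilde\Gamma_1$ and $\widetilde\Gamma_2$; since for very general $X_i$ each $\widetilde\Gamma_i$ is characterized as the strict transform of the unique embedded curve $\Gamma\subset X_i$, $\phi$ descends to an isomorphism of the polarized pairs $(X_1,\Gamma)\simeq(X_2,\Gamma)$ in $\mathbb P$, which in turn must come from an element of $\Aut_\Gamma(\mathbb P)$. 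Granting this, $\dim(\im\Phi)=\dim\mathcal L-\dim\Aut_\Gamma(\mathbb P)=18-\rk\Lambda_E$ by hypothesis, which equals the dimension of the enveloping sublocus; thus $\im\Phi$ is dense in it, and a very general point of this sublocus has Picard lattice equal to $\Lambda_0$. Therefore the very general $X\in\mathcal L$ satisfies $\rk\Pic(\widetilde X)=2+\rk\Lambda_E$, equivalently $\rk\Pic(X)=2$.

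The main obstacle is clearly the generic finiteness of $\Phi$. It requires careful control of which isomorphisms of $\widetilde X$ descend to projective automorphisms of $\mathbb P$ preserving $\Gamma$, and in particular ruling out continuous families of identifications that would move $\widetilde\Gamma$. Here one needs the rigidity of the pair $(X,\Gamma)$ inside $\mathbb P$ for very general $X$, together with the fact that polarized isomorphisms between general anticanonical hypersurfaces in a weighted projective space are induced by $\Aut(\mathbb P)$.
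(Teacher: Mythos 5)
Your proposal follows essentially the same route as the paper: identify the rank-$(2+\rk\Lambda_E)$ sublattice $\Lambda_E\oplus\langle H|_X,\widetilde\Gamma\rangle$ of $\Pic(\widetilde X)$, compare $\dim\mathcal L-\dim\Aut_\Gamma(\mathbb P)$ with the Torelli-theoretic dimension $20-\rk\Pic(\widetilde X)$ of the corresponding lattice-polarized moduli, and conclude that equality pins down the rank. The only substantive difference is how the fibres of $\mathcal L/\Aut_\Gamma(\mathbb P)\to\mathcal M_{K3}$ are controlled, which you correctly single out as the main obstacle but resolve by a stronger (and harder) claim than necessary: you want every isomorphism $\widetilde X_1\to\widetilde X_2$ to descend to an element of $\Aut_\Gamma(\mathbb P)$, which would require $\phi_*(H|_{X_1})=H|_{X_2}$ and a uniqueness-of-embedding statement that is delicate for weighted hypersurfaces. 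The paper avoids this: it only needs the fibre over a point of moduli to consist of \emph{countably many} $\Aut_\Gamma(\mathbb P)$-orbits, and this is immediate because any isomorphism $\phi\colon Y\to X$ re-embeds $X$ via the linear system $|\phi_*(H|_Y)|$ carrying $\Gamma$ to itself, and $\Pic(X)$ is countable, so there are only countably many such embeddings up to $\Aut_\Gamma(\mathbb P)$. Countability already forces $\dim(\im\Phi)=\dim\mathcal L-\dim\Aut_\Gamma(\mathbb P)$, after which your dimension count goes through verbatim; so your argument is sound once the finiteness claim is weakened to countability and justified this way.
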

\begin{proof}
By definition, the Picard lattice $\Pic(\widetilde{X})$ contains the sublattice $\Lambda_E \oplus \langle H_{|X}, 
\widetilde{\Gamma} \rangle$, where $H$ is the ample generator of the Picard group of $\mathbb{P}$, and $\widetilde{\Gamma}\subseteq\widetilde X$ is the strict transform of $\Gamma$.

For every $X \in \mathcal{L}$, the set of surfaces $Y \in \mathcal{L}$ that are isomorphic to $X$ under $\Aut_\Gamma(\mathbb{P})$ lies in at most a countable number of orbits. This follows from the observation that any isomorphism $\phi : Y \to X$ defines an embedding of $X$ in $\mathbb{P}$ via the linear system $|\phi_*(H_{|Y})|$, which maps $\Gamma$ to itself. 

Since the Picard group of $X$ is countable, there are at most countably many such embeddings up to the action of ${\rm Aut}_\Gamma(\mathbb{P})$. As a consequence, the dimension of the moduli space of, possibly singular, $K3$ surfaces in $\mathcal{L}$ is
$$
\dim \mathcal{L} - \dim {\rm Aut}_\Gamma(\mathbb{P}).
$$
By the global Torelli theorem, the dimension of the moduli space of smooth $K3$ surfaces with Picard lattice of rank 
$r$ is $20 - r$. Therefore
$$
\dim (\mathcal{L})- \dim \Aut_\Gamma(\mathbb{P}) = 20 - \rk(\Pic(\widetilde{X}))\leq 18 - \rk(\Lambda_E).
$$
If the equality holds, we have 
$$\rk(\Pic(\widetilde{X})) = {\rm rk}(\Lambda_E) + 2$$ 
or equivalently $\rk(\Pic(X)) = 2$ as claimed.
\end{proof}

Next we will show that Proposition \ref{PAut} applies to the surfaces $X_{21},X_{12},X_{10},X_{9},X_{5}$. We list the relevant numbers in the following table:
$$
\begin{array}{c|c|c|c}
X & \dim \mathcal{L} & \dim {\rm Aut}_\Gamma(X) & {\rm rk} \, \Lambda_E \\ 
\hline
X_{21} \, \text{in} \, \mathbb{P}(1,3,7,10) & 10 & 1 & 9 \\ \hline
X_{12} \, \text{in} \, \mathbb{P}(1,1,4,6) & 25 & 8 & 1\\ \hline
X_{10} \, \text{in} \, \mathbb{P}(1,1,3,5) & 24 & 8 & 2\\ \hline
X_{9} \, \text{in} \, \mathbb{P}(1,1,3,4) & 22 & 7 & 3\\ \hline
X_{5} \, \text{in} \, \mathbb{P}(1,1,1,2) & 27 & 10 & 1
\end{array}
$$

For each of these type of surfaces we will compute explicit examples, we will extensively use the following simple result.

\begin{Lemma}\label{intwps}
Let $H_{d_1},\dots,H_{d_n}\subset\mathbb{P}(a_0,\dots,a_n)$ be hypersurfaces of degree $d_1,\dots,d_n$ respectively. Then the intersection number of the $H_{d_i}$ is given by 
$$
H_{d_1}\cdots H_{d_n} = \frac{\prod_{i=1}^{n}d_i}{\prod_{i=0}^{n}a_i}.
$$
\end{Lemma}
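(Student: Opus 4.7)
The plan is to reduce the intersection computation to computing the self-intersection $H^{n}$ of a single $\mathbb{Q}$-divisor class $H$, and then to evaluate this number by pulling back along a standard finite cover by the ordinary projective space $\mathbb{P}^{n}$.

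First, I would note that $\Pic(\mathbb{P}(a_0,\ldots,a_n))\otimes\mathbb{Q}$ is one-dimensional, generated by $H:=[\mathcal{O}_{\mathbb{P}}(1)]$, and that a hypersurface $H_{d}$ of degree $d$ is the zero locus of a section of $\mathcal{O}_{\mathbb{P}}(d)$, hence has class $dH$. By multilinearity of the intersection product,
$$
H_{d_1}\cdots H_{d_n}=\Big(\prod_{i=1}^{n}d_i\Big)\,H^{n},
$$
so the lemma reduces to proving $H^{n}=1/\prod_{i=0}^{n}a_i$.

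The second step is to introduce the finite surjective morphism
$$
f\colon\mathbb{P}^{n}\longrightarrow\mathbb{P}(a_0,\ldots,a_n),\qquad (x_0,\ldots,x_n)\longmapsto(x_0^{a_0},\ldots,x_n^{a_n}).
$$
A direct fibre count over a general point with all coordinates nonzero gives $\deg(f)=\prod_{i=0}^{n}a_i$. Since the weighted coordinate $y_i$ is a section of $\mathcal{O}_{\mathbb{P}}(a_i)$ and $f^{*}y_i=x_i^{a_i}$ is a section of $\mathcal{O}_{\mathbb{P}^{n}}(a_i)$, we deduce $f^{*}\mathcal{O}_{\mathbb{P}}(a_i)=\mathcal{O}_{\mathbb{P}^{n}}(a_i)$ for each $i$, and hence $f^{*}H=[\mathcal{O}_{\mathbb{P}^{n}}(1)]$ in $\Pic\otimes\mathbb{Q}$. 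An iterated application of the projection formula then yields
$$
\Big(\prod_{i=0}^{n}a_i\Big)H^{n}=\deg(f)\cdot H^{n}=(f^{*}H)^{n}_{\mathbb{P}^{n}}=[\mathcal{O}_{\mathbb{P}^{n}}(1)]^{n}=1,
$$
and the formula follows.

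The only mildly delicate point is the verification of $\deg(f)=\prod_{i=0}^{n}a_i$, but this is immediate from the description of $\mathbb{P}(a_0,\ldots,a_n)$ as the quotient $(\mathbb{A}^{n+1}\setminus\{0\})/\mathbb{G}_m$ with twisted scalar action; no further subtlety arises, and everything else is a one-line projection-formula computation.
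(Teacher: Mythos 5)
Your proof is correct and follows essentially the same route as the paper: both arguments use the finite cover $\mathbb{P}^{n}\to\mathbb{P}(a_0,\dots,a_n)$ of degree $\prod_{i}a_i$ together with the projection formula, the only difference being that you first reduce to computing $H^{n}$ by multilinearity while the paper pulls back the product $H_{d_1}\cdots H_{d_n}$ directly.
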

\begin{proof}
The projection
$$
\begin{array}{cccc}
\pi: & \mathbb{P}^n & \longrightarrow & \mathbb{P}(a_0,\dots,a_n) \\
     & [x_0: \dots :x_n] & \longmapsto & [x_0^{a_0}: \dots :x_n^{a_n}]
\end{array}
$$
has degree 
$$
\deg(\pi) = \prod_{i=0}^{n}a_i.
$$
On one hand, $\pi^{*}(H_{d_1}\cdots H_{d_n}) = \pi^{*}H_{d_1}\cdots \pi^{*}H_{d_n} = \prod_{i=1}^{n}d_i$ since $\pi^{*}H_{d_i}$ is a hypersurface of degree $d_i$ in $\mathbb{P}^n$, and on the other hand $\pi^{*}(H_{d_1}\cdots H_{d_n}) = \deg(\pi)(H_{d_1}\cdots H_{d_n})$. Hence  
$$
(H_{d_1}\cdots H_{d_n})\cdot\prod_{i=0}^{n}a_i = (H_{d_1}\cdots H_{d_n})\cdot\deg(\pi) = \pi^{*}(H_{d_1}\cdots H_{d_n}) = \prod_{i=1}^{n}d_i
$$
and the claim follows.
\end{proof}

As we said all intersection numbers in the following examples are computed using Lemma \ref{intwps}.

\subsubsection{$X_{21}\subset\mathbb{P}(1,3,7,10)$}
Consider the map
$$
\begin{array}{cccc}
 \overline{\gamma}: & \mathbb{P}^1 & \longrightarrow & \mathbb{P}(1,3,7,10)\\
  & [u:v] & \mapsto & [u^3:u^4v^5:uv^{20}:v^{30}]
\end{array}
$$
and let $\Gamma_1$ be its image. Let $G$ be the group of automorphisms of $\mathbb{P}(1,3,7,10)$ stabilizing $\Gamma_1$. An automorphism $\alpha\in G$ fixes $[0:0:0:1]$, and therefore yields an automorphism $\overline{\alpha}$ of $\mathbb{P}^1$ fixing $[0:1]$, that is of the form  
$$
\begin{array}{cccc}
 \overline{\alpha}: & \mathbb{P}^1 & \longrightarrow & \mathbb{P}^1\\
  & [u:v] & \mapsto & [au:bu+v].
\end{array}
$$
Now, via $\overline{\gamma}$, such automorphism induces the following transformation
$$x = a^3u^3, y = a^4u^4(bu+v)^5, z = au(bu+v)^{20}, w = (bu+v)^{30}.$$
If $b = 0$ this transformation is the restriction to $\Gamma_1$ of the following automorphism of $\mathbb{P}(1,3,7,10)$:
$$
\begin{array}{cccc}
\alpha: & \mathbb{P}(1,3,7,10) & \longrightarrow & \mathbb{P}(1,3,7,10)\\
  & [x:y:z:w] & \mapsto & [a^3x: a^4y: az: w].
\end{array}
$$
On the other hand, if $b\neq 0$ there is no way to obtain for instance the monomial $u^4u^4v = u^8v$ as a product of $x = a^3u^3, y = a^4u^4(bu+v)^5, z = au(bu+v)^{20}, w = (bu+v)^{30}$. Therefore, $\overline{\alpha}$ does not come from an automorphism of $\mathbb{P}(1,3,7,10)$, and hence $G\cong \mathbb{C}^{*}$.

Consider the surface $X_{21} = \{f = 0\}$ where
$$
\begin{array}{ll}
f = & x^{14}z + x^{11}yz + 2x^{11}w - x^9y^4 - x^8y^2z + 2x^8yw + x^7z^2 -x^6y^5 - x^5y^3z +  \\
 & x^4yz^2 + x^4zw - x^3y^6 - x^2y^4z - x^2y^3w - xy^2z^2 + xw^2 - y^7 - z^3.
\end{array}
$$
Then $X_{21}$ is a $K3$ surface whose singular locus consists of a $\frac{1}{10}(3,7)$ singularity at $[0:0:0:1]$.

Now, let $S = \{x^5z - y^4 = 0\}\subset\mathbb{P}(1,3,7,10)$. The intersection $X_{21}\cap S$ has two irreducible components
$$
X_{21}\cap S = \Gamma_1 \cap \Gamma_2
$$
where $\Gamma_2$ is the curve whose ideal is generated by the following four polynomials
$$
\begin{array}{l}
x^{11}w - x^8y^2z + x^8yw - x^5y^3z + \frac{1}{2}(x^4yz^2 + x^4zw -
        x^2y^3w - xy^2z^2 + xw^2 - z^3);\\
x^{10}y^2 + x^7y^3 - \frac{1}{2}(x^6yz - x^3y^2z - x^2z^2 - y^2w);\\
x^{13} + x^{10}y + \frac{1}{2} (x^6z - x^4y^3 + x^3w + y^2z);\\
x^5z - y^4.
\end{array} 
$$
Consider the projection $\pi:\mathbb{A}^4\setminus\{(0,0,0,0)\}\rightarrow \mathbb{P}(1,3,7,10)$. The intersection $\pi^{-1}(\Gamma_1)\cap \pi^{-1}(\Gamma_2)$ has a reduced component that gets mapped by $\pi$ to six simple points and a component of multiplicity $16$ that gets mapped by $\pi$ to $[0:0:0:1]$. Hence, $\Gamma_1\cdot\Gamma_2 = 6+\frac{6}{10} = \frac{38}{5}$. Moreover, $\Gamma_1\cdot H = \frac{6}{10}$. Since $\Gamma_1 + \Gamma_2$ is cut out in $X_{21}$ by a surface of degree $12$ we have
$$
12H = \Gamma_1 + \Gamma_2
$$
and intersecting with $\Gamma_1$ we get
$$
\frac{36}{5} = 12 H\cdot\Gamma_1 = \Gamma_1^2 + \Gamma_1\cdot\Gamma_2 = \Gamma_1^2 + \frac{38}{5}.
$$
So, $\Gamma_1^2 = -\frac{2}{5}$. Now, from 
$$
\frac{72}{5} = 144H^2 = \Gamma_1^2 + 2\Gamma_1\cdot\Gamma_2 + \Gamma_2^2 = -\frac{2}{5} + \frac{76}{5} + \Gamma_2^2
$$
so that $\Gamma_2^2 = -\frac{2}{5}$.

Finally, we found two effective curves such that  
$$
\Gamma_1^2 = \Gamma_2^2 =  -\frac{2}{5},\: \Gamma_1\cdot\Gamma_2 = \frac{38}{5}
$$
and Proposition \ref{pro:mds} yields that a very general member of the linear system of $K3$ surfaces containing $\Gamma_{1}$ is a Mori dream space. 

\subsection{$X_{2+a+b}\subset\mathbb{P}(1,1,a,b)$}
In the following three examples we will always be in the following situation: 
$$X_{2+a+b}\subset\mathbb{P}(1,1,a,b)$$ 
will be an anticanonical surface, hence of degree $2+a+b$, general among those containing a component $\Gamma_1$ of the reducible curve 
$$\Gamma_1\cup\Gamma_2 = \{z = 0\}\cap X_{2+a+b}\subset\mathbb{P}(1,1,a,b),$$ 
where $[x:y:z:w]$ are homogeneous coordinates on $\mathbb{P}(1,1,a,b)$.
Moreover, one of the following conditions will hold:
\begin{itemize}
\item[-] $\gcd(a,b) = 1$ and $a$ divides $2+a+b$;
\item[-] $\gcd(a,b) > 1$ and both $a$ and $b$ divide $2+a+b$.
\end{itemize}
The singular locus of $X_{2+a+b}$ will consist of a single $A_k$ point $p\in X_{2+a+b}$, located at $p := [0:0:0:1]$ in the first case and along the singular ambient curve $x=y=0$ in the second case. 

\subsubsection{$X_{12}\subset\mathbb{P}(1,1,4,6)$}
Consider the $K3$ surface
$$
X_{12} = \{zA_8 + wB_6 = 0\}\subset\mathbb{P}(1,1,4,6)
$$
with $A_{8}\in k[x,y,z,w]_{8}$ and $B_{6}\in k[x,y,z,w]_{6}$. The singular locus of $X_{12}$ consists of a $\frac{1}{2}(1,1)$ singularity at $[0:0:1:0]$. The curve $\Gamma_1 \cup \Gamma_2$ is cut out by $\{z = 0\}$. So
$$
8 = 16H^2 = (\Gamma_1 + \Gamma_2)^2 = \Gamma_1^2 + 2\Gamma_1\cdot\Gamma_2 + \Gamma_2^2.
$$
Since $\Gamma_1 \cap \Gamma_2 = \{z = w = B_{6} = 0\}$ we have $\Gamma_1\cdot \Gamma_2 = 6$. From $4H = \Gamma_1 + \Gamma_2$ we get 
$$
4H\cdot\Gamma_1 = \Gamma_1^2 + \Gamma_1\cdot\Gamma_2
$$
and hence $\Gamma_1^2 = \Gamma_2^2 = -2$. Finally, we found two effective curves such that  
$$
\Gamma_1^2 = \Gamma_2^2 = -2,\: \Gamma_1\cdot\Gamma_2 = 6
$$
and Proposition \ref{pro:mds} yields that a very general member of the linear system of $K3$ surfaces containing $\Gamma_{1}$ is a Mori dream space. 

\subsubsection{$X_{10}\subset\mathbb{P}(1,1,3,5)$}\label{ex:1135}
Consider the $K3$ surface
$$
X_{10} = \{zA_7 + wB_5 = 0\}\subset\mathbb{P}(1,1,3,5)
$$
where $A_7\in k[x,y,z,w]_7, B_5\in k[x,y,z,w]_5$. Note that $X_{10}$ has a unique singular point at $[0:0:1:0]$ which is a singularity of type $\frac{1}{3}(1,2)$. The curve $\Gamma_1 \cup \Gamma_2$ is cut out by $\{z = 0\}$, so
$$
6 = (\Gamma_1 + \Gamma_2)^2 = \Gamma_1^2 + 2\Gamma_1\cdot\Gamma_2 + \Gamma_2^2.
$$
Moreover, $\Gamma_1 \cap \Gamma_2 = \{z = w = B_5 = 0\}$ yields $\Gamma_1\cdot \Gamma_2 = 5$. Set $\Gamma_3 = \{w = A_7 = 0\}$. Then $\Gamma_1 \cup \Gamma_3$ is cut out by $\{w = 0\}$. Hence
$$
\frac{50}{3} = (\Gamma_1 + \Gamma_3)^2 = \Gamma_1^2 + 2\Gamma_1\cdot\Gamma_3 + \Gamma_3^2
$$
and $\Gamma_1 \cap \Gamma_3 = \{z = w = A_7 = 0\}$ yields $\Gamma_1\cdot \Gamma_3 = 7$. We have $5H = \Gamma_1 + \Gamma_3$ and
$5H\cdot\Gamma_3 = \Gamma_1\cdot\Gamma_3 + \Gamma_3^2 = \frac{35}{3}$. So
$$
\Gamma_3^2 = \frac{35}{3} - \Gamma_1\cdot \Gamma_3 = \frac{14}{3}.
$$
Now 
$$\frac{50}{3} = \Gamma_1^2 + 2\Gamma_1\cdot \Gamma_3 + \Gamma_3^2 = \Gamma_1^2 + \frac{56}{3}
$$ 
yields $\Gamma_1^2 =-2$ and from
$$
6 = \Gamma_1^2 + 2\Gamma_1\cdot \Gamma_2 + \Gamma_2^2 = 8 + \Gamma_2^2
$$
we get $\Gamma_2^2 = -2$. Finally, we got two effective curves such that 
$$
\Gamma_1^2 = \Gamma_2^2 = -2,\: \Gamma_1\cdot\Gamma_2 = 5
$$
and Proposition \ref{pro:mds} yields that a very general member of the linear system of $K3$ surfaces containing $\Gamma_{1}$ is a Mori dream space.

\subsubsection{$X_{9}\subset\mathbb{P}(1,1,3,4)$}\label{ex:1134}
Consider the $K3$ surface
$$
X_{9} = \{zA_6 + wB_5 = 0\}\subset\mathbb{P}(1,1,3,4)
$$
where $A_6\in k[x,y,z,w]_6, B_5\in k[x,y,z,w]_5$. We have 
$$
\Gamma_1 = \{z = w = 0\} \text{ and } \Gamma_2 = \{z = B_5 = 0\}.
$$
Note that $X_{9}$ has a unique singular point at $[0:0:0:1]$ which is a singularity of type $\frac{1}{4}(1,3)$. The curve $\Gamma_1 \cup \Gamma_2$ is cut out by $\{z = 0\}$, so
$$
\frac{27}{4} = (\Gamma_1 + \Gamma_2)^2 = \Gamma_1^2 + 2\Gamma_1\cdot\Gamma_2 + \Gamma_2^2.
$$
Moreover, $\Gamma_1 \cap \Gamma_2 = \{z = w = B_5 = 0\}$ yields $\Gamma_1\cdot \Gamma_2 = 5$. Set $\Gamma_3 = \{w = A_6 = 0\}$. Then $\Gamma_1 \cup \Gamma_3$ is cut out by $\{w = 0\}$. Hence
$$
12 = (\Gamma_1 + \Gamma_3)^2 = \Gamma_1^2 + 2\Gamma_1\cdot\Gamma_3 + \Gamma_3^2
$$
and $\Gamma_1 \cap \Gamma_3 = \{z = w = A_6 = 0\}$ yields $\Gamma_1\cdot \Gamma_3 = 6$. We have $4H = \Gamma_1 + \Gamma_3$ and
$4H\cdot\Gamma_3 = \Gamma_1\cdot\Gamma_3 + \Gamma_3^2 = 8$. So
$$
\Gamma_3^2 = 8 - \Gamma_1\cdot \Gamma_3 = 2.
$$
Now 
$$12 = \Gamma_1^2 + 2\Gamma_1\cdot \Gamma_3 + \Gamma_3^2 = \Gamma_1^2 +14
$$ 
yields $\Gamma_1^2 = -2$ and from
$$
\frac{27}{4} = \Gamma_1^2 + 2\Gamma_1\cdot \Gamma_2 + \Gamma_2^2
$$
we get $\Gamma_2^2 = -\frac{5}{4}$. Finally, we got two effective curves such that 
$$
\Gamma_1^2 = -2,\: \Gamma_2^2 = -\frac{5}{4},\: \Gamma_1\cdot\Gamma_2 = 5
$$
and Proposition \ref{pro:mds} yields that a very general member of the linear system of $K3$ surfaces containing $\Gamma_{1}$ is a Mori dream space. 

\subsubsection{$X_{5}\subset\mathbb{P}(1,1,1,2)$}\label{ex:1112}
Consider the $K3$ surface
$$
X_5 = \{yA_4 + wB_3 = 0\}\subset\mathbb{P}(1,1,1,2)
$$
where $A_4\in k[x,y,z,w]_4, B_3\in k[x,y,z,w]_3$. We have 
$$
\Gamma_1 = \{y = w = 0\} \text{ and } \Gamma_2 = \{y = B_3 = 0\}.
$$
Note that $X_5$ has a unique singular point at $[0:0:0:1]$ which is a singularity of type $\frac{1}{2}(1,1)$. Since $\Gamma_1\cdot\Gamma_2 = 3$ We have 
$$1 = H\cdot\Gamma_1 = \Gamma_1^2 + \Gamma_1\cdot\Gamma_2 = \Gamma_1^2 + 3.$$
So $\Gamma_1^2 = -2$. Furthermore, 
$$
\frac{5}{2} = H^2 = \Gamma_1^2 + 2\Gamma_1\cdot\Gamma_2 + \Gamma_2^2 = -2 + 6 + \Gamma_2^2
$$
yields $\Gamma_2^2 = -\frac{3}{2}$. Summing-up we found two effective curves such that 
$$
\Gamma_1^2 = -2,\: \Gamma_2^2 = -\frac{3}{2},\: \Gamma_1\cdot\Gamma_2 = 3
$$
and Proposition \ref{pro:mds} yields that a very general member of the linear system of $K3$ surfaces containing $\Gamma_{1}$ is a Mori dream space. 

\subsection{Magma scripts}
The Magma code used in this paper is available at 
\begin{center}
\url{https://github.com/alaface/singk3}
\end{center}

\bibliographystyle{amsalpha}
\bibliography{Biblio}

\end{document}